\numberwithin{equation}{section} 
\def\trait#1#2#3{\vrule width #1pt height #2pt depth #3pt}
\def\trait#1#2#3{\vrule width #1pt height #2pt depth #3pt}
\def\trace{\hskip-1pt\mathrel{\hbox{\trait{.3}{7}{0}%
\trait{6}{.3}{0}}}}
\newtheorem{theorem}{Theorem}[section]
\newtheorem{lemma}[theorem]{Lemma}
\newtheorem{definition}[theorem]{Definition}
\newtheorem{remark}[theorem]{Remark}
\newtheorem{proposition}[theorem]{Proposition}
\newtheorem{example}[theorem]{Example}
\newenvironment{Proof}{\removelastskip\par\medskip 
\noindent{\em Proof.}
\rm}{\penalty-20\null\hfill$\square$\par\medbreak}
\newcommand{\Q}{\mathbb{Q}}
\newcommand{\N}{\mathbb{N}}
\newcommand{\R}{\mathbb{R}}
\newcommand{\Z}{\mathbb{Z}}
\newcommand{\Haus}[1]{{\mathscr H}^{#1}} 
\title{The Steiner tree problem revisited through rectifiable $G$-currents}
\author{Andrea Marchese, Annalisa Massaccesi}
\begin{document}

\begin{abstract}
The Steiner tree problem can be stated in terms of finding a connected set of minimal length containing a given set of finitely many points. We show how to formulate it as a mass-minimization problem for $1$-dimensional currents with coefficients in a suitable normed group. The representation used for these currents allows to state a calibration principle for this problem. We also exhibit calibrations in some examples.
\end{abstract}
\maketitle

\section*{Introduction}
The classical {\em Steiner tree problem}\index{Steiner tree problem} consists in finding the shortest connected set containing $n$ given distinct points $p_1,\ldots,p_n$ in $\R^d$. Some very well-known examples are shown in Figure \ref{fig:sol_steiner}.

\begin{figure}[htbp]
\begin{center}
\scalebox{1}{
\input{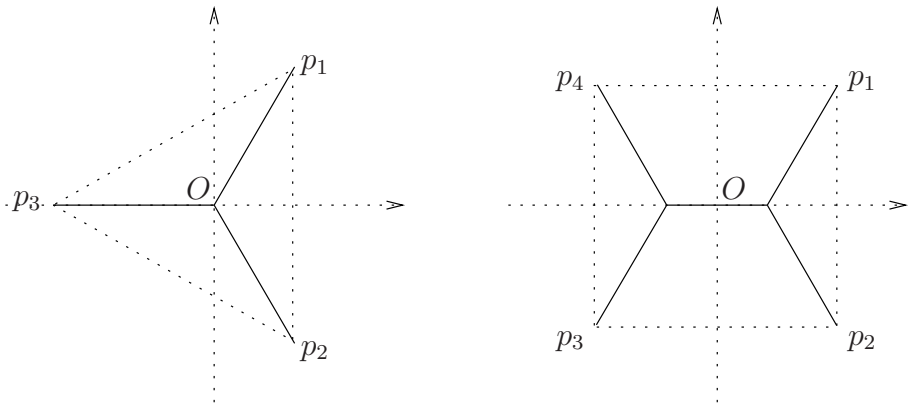}
}
\end{center}
\caption{Solutions for the vertices of an equilateral triangle and a square}
\label{fig:sol_steiner}
\end{figure}

The problem is completely solved in $\R^2$ and there exists a wide literature on the subject, mainly devoted to improving the efficiency of algorithms for the construction of solutions: see, for instance, \cite{gilbert_pollak} and \cite{courant_robbins} for a survey of the problem. The recent papers \cite{stepanov} and \cite{ulivi} witness the current studies on the problem and its generalizations.

Our aim is to rephrase the Steiner tree problem as an equivalent mass minimization problem by replacing connected sets with 1-currents with coefficients in a more suitable group than $\Z$, in such a way that solutions of one problem correspond to solutions of the other, and vice-versa. The use of currents allows to exploit techniques and tools from the Calculus of Variations and the Geometric Measure Theory.



Let us briefly point out a few facts suggesting that classical polyhedral chains with integer coefficients might not be the correct environment for our problem. First of all, one should make the given points $p_1,\ldots,p_n$ in the Steiner problem correspond to some integral polyhedral $0$-chain supported on $p_1,\ldots,p_n$, with suitable multiplicities $m_1,\ldots,m_n$. One has to impose that $m_1+\ldots+m_n=0$ in order that this $0$-chain is the boundary of a compactly supported $1$-chain. In the example of the equilateral triangle, see Figure \ref{fig:sol_steiner}, the condition $m_3=-(m_1+m_2)$ forces to break symmetry, leading to the minimizer in Figure \ref{fig:sol_cl_curr_square}. The desired solution is instead depicted in Figure \ref{fig:sol_steiner}. In the second example from Figure \ref{fig:sol_steiner}, we get the ``wrong'' non-connected minimizer even though all boundary multiplicities have modulus $1$; see Figure \ref{fig:sol_cl_curr_square}.

\begin{figure}[htbp]
\begin{center}
\scalebox{1}{
\input{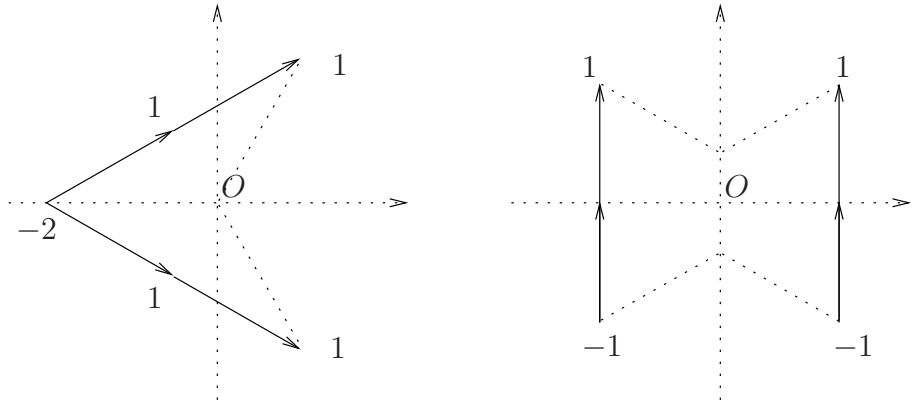}
}
\end{center}
\caption{Solutions for the mass minimization problems among polyhedral chains with integer coefficients}
\label{fig:sol_cl_curr_square}
\end{figure}

\noindent These examples show that $\Z$ is not the right group of coefficients.


Our framework will be that of currents with coefficients in a normed abelian group $G$ (briefly: $G$-currents), which we will introduce in \S \ref{sec:beginning}.


Currents with coefficients in a group were introduced by W. Fleming. There is a vast literature on the subject: let us mention only the seminal paper \cite{Fleming}, the work of B. White \cite{white1,white2}, and the more recent papers by T. De Pauw and R. Hardt \cite{depauw} and by L. Ambrosio and M. G. Katz \cite{ambrosio_2}. A Closure Theorem holds for these flat $G$-chains, see \cite{Fleming} and \cite{white2}.


In \S \ref{sec:steiner} we recast the Steiner problem in terms of a mass minimization problem over currents with coefficients in a discrete group $G$, chosen only on the basis of the number of boundary points. As we already said, this construction provides a way to pass from a mass minimizer to a Steiner solution and vice-versa.


This new formulation permits to initiate a study of calibrations as a sufficient condition for minimality; this is the subject of \S \ref{sec:calibrations}. Classically a calibration\index{calibration} $\omega$ associated with a given oriented $k$-submanifold $S\subset\R^d$ is a unit closed $k$-form taking value $1$ on the tangent space of $S$.
The existence of a calibration guarantees the minimality of $S$ among oriented submanifolds with the same boundary $\partial S$. Indeed, Stokes Theorem and the assumptions on $\omega$ imply that
$$
{\rm vol}(S)=\int_S\omega=\int_{S'}\omega\le{\rm vol}(S'),
$$
for any submanifold $S'$ having the same boundary of $S$.

In order to define calibrations in the framework of $G$-currents, it is convenient to view currents as linear functionals on forms, which is not always possible in the usual setting of currents with coefficients in groups. This motivates the preliminary work in \S \ref{sec:beginning}, where we embed the group $G$ in a normed linear space $E$ and we construct the currents with coefficients in $E$ in the classical way. In Definition \ref{def:gen_calib}, the notion of calibration is slightly weakened in order to include piecewise smooth forms, which appear in Examples \ref{ex:calib_square} and \ref{ex:calib_hex_vittone}, where we exhibit calibrations for the problem on the right of Figure \ref{fig:sol_steiner} and for the Steiner tree problem on the vertices of a regular hexagon plus the center. It is worthwhile to note that our theory works for the Steiner tree problem in $\R^d$ and for currents supported in $\R^d$; we made explicit computations only on $2$-dimensional configurations for simplicity reasons. We conclude \S \ref{sec:calibrations} with some remarks concerning the use of calibrations in similar contexts, see for instance \cite{morgan}.

The existence of a calibration is a sufficient condition for a manifold to be a mi\-ni\-mi\-zer; one could wonder whether this condition is  necessary as well. In general, a smooth (or piecewise smooth, according to Definition \ref{def_compatib_calibration}) calibration might not exist; nevertheless, one can still search for some {\em weak} calibration, for instance a differential form with bounded measurable coefficients. In \S \ref{sec:open_pb} we discuss a strategy in order to get the existence of such a weak calibration. A duality argument due to H. Federer  \cite{federer} ensures that a weak calibration exists for mass-minimizing normal currents; the same argument works for mass-minimizing normal currents with coefficients in the normed vector space $E$. Therefore an equivalence principle between minima among normal and rectifiable $1$-currents with coefficients in $E$ and $G$, respectively, is sufficient to conclude that a calibration exists. Proposition \ref{thm_minmin_classic} guarantees that the equivalence between minima holds in the case of classical $1$-currents with real coefficients; hence a weak calibration always exists. The proof of this result is subject to the validity of a homogeneity property for the candidate minimizer stated in Remark \ref{rmk:homog_cond}. Example \ref{ex_casetta_piccola} shows that for $1$-dimensional $G$-currents an interesting new phenomenon occurs, since (at least in a non-Euclidean setting) this homogeneity property might not hold; the validity of the homogeneity property may be related to the ambient space. The problem of the existence of a calibration in the Euclidean space is still open.\medskip

{\em Acknowledgements.} The authors warmly thank Professor Giovanni Alberti for having posed the problem and for many useful discussions.


\section{Rectifiable currents over a coefficient group}\label{sec:beginning}

In this section we provide definitions for currents over a coefficient group, with some basic examples.

Fix an open set $U\subset\R^d$ and a normed vector space $(E,\|\cdot\|_E)$ with finite dimension $m\ge 1$. We will denote by $(E^*,\|\cdot\|_{E^*})$ its dual space endowed with the dual norm
$$
\|f\|_{E^*}:=\sup_{\|v\|_E\le 1}\langle f;v\rangle\ .
$$


\begin{definition}\label{def_form}
{\rm
We say that a map $$\omega:\Lambda_k(\R^d)\times E\to\R$$ is an $E^*$-valued $k$-covector\index{$E^*$-valued $k$-covector} in $\R^d$ if
\begin{itemize}
\item[(i)]$\forall\,\tau\in \Lambda_k(\R^d),\quad\omega(\tau,\cdot)\in E^*$, that is $\omega(\tau,\cdot):E\to\R$ is a linear function.
\item[(ii)]$\forall\, v\in E,\quad\omega(\cdot,v):\Lambda_k(\R^d)\to\R$ is a (classical) $k$-covector.
\end{itemize}

\noindent Sometimes we will use $\langle\omega;\tau,v\rangle$\index{$\langle\omega;\tau,v\rangle$} instead of $\omega(\tau,v)$, in order to simplify the notation. The space of $E^*$-valued $k$-covectors in
$\R^d$ is denoted by $\Lambda^k_E(\R^d)$\index{$\Lambda^k_E(\R^d)$} and it is endowed with the comass norm
\begin{equation}\label{covector_norm}
\|\omega\|:=\sup\left\{\left\|\omega(\tau,\cdot)\right\|_{E^*}\,:\,|\tau|\leq 1,\tau\ {\rm simple}\right\}\ .
\end{equation}
}
\end{definition}


\begin{remark}\label{rmk:components}
{\rm
Fix an orthonormal system of coordinates in $\R^d$, $({\bf e}_1,\ldots,{\bf e}_d)$; the corresponding dual base in $(\R^d)^{*}$ is $(dx_1,\ldots,dx_d)$. Consider a complete biorthonormal system for $E$, i.e., a pair
$$(v_1,\dots,v_m)\in E^m;\ (w_1,\ldots,w_m)\subset (E^*)^m$$ such that $\|v_i\|_E=1$, $\|w_i\|_{E^*}=1$ and $\langle w_i; v_j \rangle=\delta_{ij}$. Given an $E^*$-valued $k$-covector $\omega$, we denote
$$
\omega^j:=\omega(\cdot,v_j).\index{$\omega^j$}
$$
For each $j\in\{1,\ldots, m\}$, $\omega^j$ is a $k$-covector in the usual sense.
Hence the bi\-or\-tho\-nor\-mal system $(v_1,\dots,v_m)$, $(w_1,\ldots,w_m)$ allows to write $\omega$ in ``components'' $$
\omega=(\omega^1,\ldots,\omega^m)\,,$$
in fact we have
$$
\omega(\tau,v)=\sum_{j=1}^m \langle\omega^j; \tau\rangle\langle w_j;v\rangle\ .
$$
In particular $\omega^j$ admits the usual representation
$$\omega^j=\sum_{1\leq i_1<\ldots<i_k\leq d} a^j_{i_1\ldots i_k}dx_{i_1}\wedge\ldots\wedge dx_{i_k}\,,\qquad j=1,\ldots,m.$$
}
\end{remark}


\begin{definition}\label{def_form2}
{\rm
An $E^*$-valued differential $k$-form\index{$E^*$-valued differential $k$-form} in $U\subset\R^d$, or just a $k$-form when it is clear which vector space we are referring to, is a map
$$
\omega:U\to\Lambda^k_E(\R^d) ;
$$
we say that $\omega$ is $\mathscr{C}^\infty$-regular if every component $\omega^j$ is so (see Remark \ref{rmk:components}).
We denote by $\mathscr{C}_c^\infty(U,\Lambda^k_E(\R^d))$\index{$\mathscr{C}_c^\infty(U,\Lambda^k_E(\R^d))$} the vector space of $\mathscr{C}^\infty$-regular $E^*$-valued $k$-forms with compact support in $U$.
}
\end{definition}

We are mainly interested in $E^*$-valued $1$-forms, nevertheless we analyze $k$-forms in wider generality, in order to ease other definitions, such as the differential of an $E^*$-valued form and the boundary of
an $E$-current.

\begin{definition}{\rm
We define the differential ${\rm{d}}\omega$ of a $\mathscr{C}^\infty$-regular $E^*$-valued $k$-form $\omega$ by components:
$$
{\rm{d}}\omega^j:={\rm{d}}(\omega^j):U\to\Lambda^{k+1}(\R^d)\,,\qquad j=1,\ldots,m\ ,
$$
Moreover, $\mathscr{C}^\infty_c(U,\Lambda^1_E(\R^d))$ has a norm, denoted by $\|\cdot\|$, given by the supremum of the comass norm of the form defined in \eqref{covector_norm}. Hence we mean
\begin{equation}\label{norm_kform}
\|\omega\|:=\sup_{x\in U}\|\omega(x)\|\ .
\end{equation}
}
\end{definition}


\begin{definition}\label{def_current}
{\rm
A $k$-dimensional current $T$ in $U\subset\R^d$, with coefficients in $E$, or just an $E$-current\index{$E$-current} when there is no doubt on the dimension, is a linear and continuous function
$$
T:\mathscr{C}^\infty_c(U,\Lambda^k_E(\R^d))\longrightarrow\R\ ,
$$
where the continuity is meant with respect to the locally convex topology on the space $\mathscr{C}^\infty_c(U,\Lambda^k_E(\R^d))$, built in analogy with the topology on $\mathscr{C}^{\infty}_c(\R^n)$, with respect
to which distributions are dual. This defines the weak$^*$ topology on the space of $k$-dimensional $E$-currents. Convergence in this topology is equivalent to the convergence of all the ``components'' in the space
of classical\footnote{In the sequel we will use ``classical'' to refer to the usual currents, with coefficients in $\R$ or possibly in $\Z$.} $k$-currents, by which we mean the following. We define for every
$k$-dimensional $E$-current $T$ its components $T^j$, for $j=1,\ldots m$, and we write $$T=(T^1,\ldots,T^m),$$
denoting
$$
\langle T^j;\varphi\rangle:=\langle T;\widetilde\varphi_j\rangle\ ,
$$
for every (classical) compactly supported differential $k$-form $\varphi$ on $\R^d$. Here $\widetilde\varphi_j$ denotes the $E^*$-valued differential $k$-form on $\R^d$ such that
\begin{align}
& \widetilde\varphi_j(\cdot,v_j)=\varphi,\\
& \widetilde\varphi_j(\cdot,v_i)=0\quad{\rm for}\ i\neq j\ .
\end{align}
It turns out that a sequence of $k$-dimensional $E$-currents $T_h$ weakly$^*$ converges to an $E$-current $T$ (in this case we write $T_h\stackrel{\ast}{\rightharpoonup}T$) if and only if the sequence of the
components $T_h^j$ converge to $T^j$ in the space of classical $k$-currents, for $j=1,\ldots,m$.

}
\end{definition}

\begin{definition}\label{def_boundary}
{\rm
For a $k$-current $T$ over $E$ we define the boundary operator
$$
\langle\partial T;\varphi\rangle:=\langle T;{\rm{d}}\varphi\rangle\quad\forall\,\varphi=(\varphi^1,\ldots,\varphi^m)\in \mathscr{C}^\infty_c(U,\Lambda^{k-1}_E(\R^d))
$$
and the mass
$$
\mathds{M}(T):=\sup_{\|\omega\|\leq 1} \langle T;\omega\rangle.
$$

As one can expect, the boundary $\partial (T^j)$ of every component $T^j$ is the relative component $(\partial T)^j$of the boundary $\partial T$.
}
\end{definition}


\begin{definition}\label{def_norm_current}{\rm
A $k$-dimensional normal $E$-current\index{$k$-dimensional normal $E$-current} in $U\subset\R^d$ is an $E$-current $T$ with $\mathds{M}(T)<+\infty$ and $\mathds{M}(\partial T)<+\infty$.
Thanks to the Riesz Theorem, $T$ admits the following representation:
$$
\langle T;\omega\rangle=\int_U\langle\omega(x);\tau(x),v(x)\rangle\,{\rm{d}}\mu_T(x)\ ,\quad\forall\,\omega\in \mathscr{C}^\infty_c(U,\Lambda^{k}_E(\R^d))\ .
$$
where $\mu_T$ is a Radon measure on $U$, $v:U\to E$ is summable with respect to $\mu_T$ and $|\tau|=1$, $\mu_T$-a.e. A similar representation holds for the boundary $\partial T$.

}
\end{definition}

\begin{definition}\label{def_rect_current}
{\rm
A rectifiable $k$-current $T$ in $U\subset\R^d$, over $E$, or a rectifiable $E$-current\index{rectifiable $E$-current} is an $E$-current admitting the following representation:
$$
\langle T;\omega\rangle:=\int_{\Sigma}\langle\omega(x);\tau(x),\theta(x)\rangle\,{\rm{d}}\Haus{k}(x),\quad\forall\,\omega\in \mathscr{C}^\infty_c(\R^d,\Lambda^{k}_E(U))
$$
where $\Sigma$ is a countably $k$-rectifiable set (see Definition 5.4.1 of \cite{krantz}) contained in $U$, $\tau(x)\in T_x\Sigma$ with $|\tau(x)|=1$ for $\Haus{k}$-a.e. $x\in\Sigma$ and
$\theta\in L^1(\Haus{k}\trace\Sigma;E)$. We will refer to such a current as $T=T(\Sigma,\tau,\theta)$.\index{$T(\Sigma,\tau,\theta)$} If $B$ is a Borel set and $T(\Sigma,\tau,\theta)$ is a rectifiable $E$-current,
we denote by $T\trace B$ the current $T(\Sigma\cap B,\tau,\theta)$.
}
\end{definition}

Consider now a discrete subgroup $G<E$, endowed with the restriction of the norm $\|\cdot\|_E$. If the multiplicity $\theta$ takes only values in $G$, and if the same holds in the representation of $\partial T$,
we call $T$ a rectifiable $G$-current. Pay attention to the fact that, in the framework of currents over the coefficient group $E$, rectifiable $E$-currents play the role of (classical) rectifiable current, while
rectifiable $G$-currents correspond to (classical) integral currents. Actually this correspondence is an equality, when $E$ is the group $\R$ (with the Euclidean norm) and $G$ is $\Z$.




The next proposition gives a formula to compute the mass of a 1-dimensional rectifiable $E$-current.

\begin{proposition}\label{prop_massmult}
Let $T=T(\Sigma,\tau,\theta)$ be a 1-dimensional rectifiable $E$-current, then
$$
\mathds{M}(T)=\int_\Sigma\|\theta(x)\|_E\,{\rm{d}}\Haus{1}(x)\ .
$$
\end{proposition}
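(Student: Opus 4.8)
The plan is to establish the two inequalities separately. The bound $\mathds{M}(T)\le\int_\Sigma\|\theta\|_E\,{\rm d}\Haus{1}$ is elementary, whereas the reverse inequality is the real content and rests on an approximation argument. For the upper bound, fix an admissible test form $\omega$ with $\|\omega\|\le 1$. For $\Haus{1}$-a.e. $x\in\Sigma$ the $1$-vector $\tau(x)$ is simple and $|\tau(x)|=1$, so by \eqref{covector_norm} and \eqref{norm_kform} we have $\|\omega(x)(\tau(x),\cdot)\|_{E^*}\le\|\omega(x)\|\le 1$. Pairing the functional $\omega(x)(\tau(x),\cdot)\in E^*$ with $\theta(x)\in E$ and invoking the definition of the dual norm gives the pointwise estimate
$$
\langle\omega(x);\tau(x),\theta(x)\rangle\le\|\omega(x)(\tau(x),\cdot)\|_{E^*}\,\|\theta(x)\|_E\le\|\theta(x)\|_E.
$$
Integrating over $\Sigma$ and taking the supremum over $\omega$ yields $\mathds{M}(T)\le\int_\Sigma\|\theta\|_E\,{\rm d}\Haus{1}$.

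For the reverse inequality I would first construct a pointwise optimal, but merely Borel, competitor. We may assume $\int_\Sigma\|\theta\|_E\,{\rm d}\Haus{1}<+\infty$, the other case being trivial, and set $\lambda:=\|\theta\|_E\,\Haus{1}\trace\Sigma$, a finite Radon measure. Since $E$ is finite-dimensional, Hahn--Banach provides for each $x$ a functional $f_x\in E^*$ with $\|f_x\|_{E^*}\le 1$ and $\langle f_x;\theta(x)\rangle=\|\theta(x)\|_E$, and a measurable selection theorem lets me take $x\mapsto f_x$ Borel. Defining $\omega_0(x)(\sigma,v):=\langle\sigma,\tau(x)\rangle\,\langle f_x;v\rangle$ produces a bounded Borel $E^*$-valued $1$-form; a direct computation using $|\tau(x)|=1$ shows that its comass is $\le 1$ and that $\langle\omega_0(x);\tau(x),\theta(x)\rangle=\|\theta(x)\|_E$ at $\Haus{1}$-a.e. point of $\Sigma$.

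It remains to replace $\omega_0$ by smooth compactly supported forms, against which the mass is actually computed. Fix $\varepsilon>0$ and apply Lusin's theorem with respect to $\lambda$ to obtain a compact set $K\subset\Sigma$ with $\lambda(\Sigma\setminus K)<\varepsilon$ on which $\omega_0$ is continuous. A Tietze extension, cut off to have compact support and composed with the nearest-point projection onto the (convex) comass unit ball, yields a continuous form $\tilde\omega$ with $\|\tilde\omega\|\le 1$ and $\tilde\omega\equiv\omega_0$ on $K$. Mollifying, $\omega_\delta:=\eta_\delta*\tilde\omega$ is smooth, and since the comass is a norm and convolution is an average one still has $\|\omega_\delta\|\le 1$, while $\omega_\delta\to\tilde\omega$ uniformly because $\tilde\omega$ is continuous. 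Using $\tilde\omega=\omega_0$ on $K$ and $|\langle\tilde\omega;\tau,\theta\rangle|\le\|\theta\|_E$ off $K$, we pass to the limit:
$$
\lim_{\delta\to 0}\langle T;\omega_\delta\rangle=\int_\Sigma\langle\tilde\omega;\tau,\theta\rangle\,{\rm d}\Haus{1}\ge\lambda(K)-\lambda(\Sigma\setminus K)\ge\lambda(\Sigma)-2\varepsilon.
$$
As $\varepsilon>0$ is arbitrary, $\mathds{M}(T)\ge\lambda(\Sigma)=\int_\Sigma\|\theta\|_E\,{\rm d}\Haus{1}$, completing the argument.

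The step I expect to be the genuine obstacle is this final regularization: one must approximate the optimal measurable form by smooth forms while keeping the constraint $\|\omega\|\le 1$ intact and without losing mass on the $\Leb{d}$-negligible set $\Sigma$. The convexity of the comass unit ball, which makes both the projection and the mollification nonincreasing for the comass, together with the uniform convergence furnished by Lusin's theorem, is exactly what reconciles these competing requirements.
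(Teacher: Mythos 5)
Your argument is correct, and since the paper states Proposition \ref{prop_massmult} without proof there is nothing to compare it against; what you give is the standard argument. The inequality $\mathds{M}(T)\le\int_\Sigma\|\theta\|_E\,{\rm d}\Haus{1}$ is immediate from the definitions of the comass and the dual norm exactly as you say, and your reverse inequality correctly handles the only delicate point: upgrading the Borel-measurable pointwise-optimal form $\omega_0(x)=\langle\cdot,\tau(x)\rangle f_x$ to an admissible smooth competitor via Lusin, a retraction onto the (convex) comass unit ball, and mollification, while controlling the possibly negative contribution off the Lusin set by $-\lambda(\Sigma\setminus K)$. Note only that by Definition \ref{def_rect_current} the multiplicity satisfies $\theta\in L^1(\Haus{1}\trace\Sigma;E)$, so the case $\int_\Sigma\|\theta\|_E\,{\rm d}\Haus{1}=+\infty$ you set aside does not in fact occur.
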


\noindent Since the mass is lower semicontinuous, we can apply the direct method of
the Calculus of Variations for the existence of minimizers with given
boundary, once we provide the following compactness result. Here we assume for simplicity that $G$ is the subgroup of $E$ generated by $v_1,\ldots,v_m$ (see Remark \ref{rmk:components}). A similar argument works
for every discrete subgroup $G$.

\begin{theorem}\label{thm_rectifiability}
Let $\left(T_h\right)_{h\ge 1}$ be a sequence of rectifiable $G$-currents such that there exists a positive finite constant $C$ satisfying
$$
\mathds{M}(T_h)+\mathds{M}(\partial T_h)\le C\;\;\;{\rm{for\;every\;}}h\ge 1\ .
$$
Then there exists a subsequence $\left(T_{h_i}\right)_{i\ge 1}$ and a rectifiable $G$-current $T$ such that
$$
T_{h_i}\stackrel{\ast}{\rightharpoonup}T.
$$
\end{theorem}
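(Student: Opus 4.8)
The plan is to reduce everything to the classical compactness theorem for integral currents by working componentwise, and then to reassemble the limiting components into a single rectifiable $G$-current. First I would record the effect of the biorthonormal system on a rectifiable $G$-current $T=T(\Sigma,\tau,\theta)$: writing $\theta(x)=\sum_{j=1}^m\theta^j(x)v_j$ with $\theta^j(x)=\langle w_j;\theta(x)\rangle\in\Z$, the computation
$$
\langle T^j;\varphi\rangle=\langle T;\widetilde\varphi_j\rangle=\int_\Sigma\langle w_j;\theta(x)\rangle\,\langle\varphi(x);\tau(x)\rangle\,{\rm d}\Haus{1}(x)
$$
shows that each component $T^j$ is exactly the classical rectifiable current $T(\Sigma,\tau,\theta^j)$ with integer multiplicity, hence an integral current. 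Since $\|w_j\|_{E^*}=1$ we get the pointwise bound $|\theta^j(x)|\le\|\theta(x)\|_E$, so Proposition \ref{prop_massmult} yields $\mathds{M}(T^j)\le\mathds{M}(T)$; using the identity $(\partial T)^j=\partial(T^j)$ from Definition \ref{def_boundary} together with the same estimate for the $0$-dimensional boundary, we also get $\mathds{M}(\partial(T^j))\le\mathds{M}(\partial T)$.

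Applied to the sequence, these bounds give $\mathds{M}(T_h^j)+\mathds{M}(\partial T_h^j)\le\mathds{M}(T_h)+\mathds{M}(\partial T_h)\le C$ for every $h$ and every $j$, so each component sequence $(T_h^j)_h$ is a sequence of integral currents with equibounded mass and boundary mass. I would then invoke the classical Federer--Fleming compactness theorem (see e.g. \cite{krantz}) to extract, for $j=1$, a subsequence along which $T_h^1$ converges weakly$^*$ to an integral current $S^1$; extracting successively a further subsequence for $j=2,\ldots,m$ produces a single subsequence $(T_{h_i})_i$ along which $T_{h_i}^j\stackrel{\ast}{\rightharpoonup}S^j$ with $S^j$ integral, for all $j$ simultaneously. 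By the characterization of weak$^*$ convergence of $E$-currents through their components (Definition \ref{def_current}), this is precisely $T_{h_i}\stackrel{\ast}{\rightharpoonup}T$, where $T$ is the $E$-current with components $(S^1,\ldots,S^m)$.

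It remains to show that $T$ is a rectifiable $G$-current. Writing each limit as $S^j=T(\Sigma_j,\tau_j,\theta^j)$ with $\theta^j$ integer-valued, I would set $\Sigma:=\bigcup_{j}\Sigma_j$, which is again countably $1$-rectifiable, fix a single unit orientation $\tau$ on $\Sigma$, and rewrite $S^j=T(\Sigma,\tau,\tilde\theta^j)$ with integer multiplicities $\tilde\theta^j$, the sign being adjusted according to whether $\tau$ agrees with $\tau_j$ at $\Haus{1}$-a.e. point of $\Sigma_j$ (and $\tilde\theta^j=0$ off $\Sigma_j$). Defining $\theta(x):=\sum_{j=1}^m\tilde\theta^j(x)v_j$, which takes values in $G$, the componentwise computation of the first step shows that $T(\Sigma,\tau,\theta)$ has components $S^1,\ldots,S^m$; since an $E$-current is determined by its components, $T=T(\Sigma,\tau,\theta)$ is rectifiable with multiplicity in $G$. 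The boundary $\partial T$ has components $\partial S^j$, each a finite sum of integer point masses, so $\partial T$ is a $0$-dimensional rectifiable current with multiplicity in $G$ as well, completing the proof that $T$ is a rectifiable $G$-current.

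The main obstacle is twofold. The reassembly step is the delicate one: one must glue the separately produced rectifiable carriers $\Sigma_j$ and orientations $\tau_j$ into a single oriented rectifiable set, so that the integer multiplicities combine into a $G$-valued multiplicity; this is where the discreteness of $G$ and the fact that it is generated by $v_1,\ldots,v_m$ are essential, guaranteeing that integer combinations of the $v_j$ stay in $G$. Secondly, the classical compactness theorem requires the competitors to have supports in a fixed compact set, which is not literally implied by the mass bounds alone; in the intended applications this is harmless, since one may confine all $T_h$ to a fixed compact set (for the Steiner problem, the convex hull of the boundary points) by a $1$-Lipschitz projection that does not increase mass, and I would carry out this reduction before invoking the theorem.
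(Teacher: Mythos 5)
Your proof is correct and follows essentially the same route as the paper's: reduce to the components $T_h^j$, which are integral currents with equibounded mass and boundary mass by the biorthonormality of $(v_j)$ and $(w_j)$, apply the classical compactness theorem along a common subsequence, and reassemble the integral limits into a rectifiable $G$-current. Your version is in fact more careful than the paper's on two points the paper leaves implicit, namely the gluing of the limit carriers $\Sigma_j$ into a single oriented rectifiable set with $G$-valued multiplicity and the reduction to a fixed compact support before invoking Federer--Fleming.
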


\begin{proof}
The statement of the theorem can be proved component by component. In fact, let $T_h^1,\ldots,T_h^m$ be the components of $T_h$.
Since $(v_1,\ldots,v_m),(w_1,\ldots,w_m)$ is a biorthonormal system, we have
$$
\mathds{M}(T_h^j)+\mathds{M}(\partial T_h^j)\le m(\mathds{M}(T_h)+\mathds{M}(\partial T_h))\le mC\ ,
$$
hence, after a diagonal procedure, we can find a subsequence $\left(T_{h_i}\right)_{i\ge 1}$ such that $\left(T_{h_i}^j\right)_{i\ge 1}$ weakly$^*$ converges to some integral current $T^j$, for every $j=1,\ldots, m$. Denoting by $T$ the rectifiable $G$-current, whose components are $T^1,\ldots,T^m$, we have
$$
T_{h_i}\stackrel{\ast}{\rightharpoonup}T.
$$
\end{proof}

We conclude this section with some notations and basic facts about certain classes of rectifiable $E$-currents. Given a Lipschitz path $\gamma:[0,1]\to\R^2$ (parametrized with constant speed), and a coefficient $g\in G$, we define the associated 1-dimensional rectifiable $G$-current $T=T(\Gamma,\tau,g)$, where $\Gamma$ is the curve $\gamma([0,1])$ and, denoting by $\ell(\Gamma)$ the length of the curve $\Gamma$, the orientation $\tau$
is defined by $\tau(\gamma(t)):=\gamma'(t)/\ell(\Gamma)$ for a.e. $t\in[0,1]$. It turns out that the boundary of such a current is $\partial T=g\delta_{\gamma(1)}-g\delta_{\gamma(0)}$, where the notation means that for every
smooth $E^*$-valued map $\omega$, there holds $\langle \partial T;\omega\rangle=\langle\omega(\gamma(1));g\rangle-\langle\omega(\gamma(0));g\rangle$. Using this notation, we observe that, given some points $P_1,\ldots,P_k$
and some multiplicities $g_1,\ldots,g_k$ in $G$, the $0$-dimensional rectifiable $G$-current
$S=g_1\delta_{P_1}+\ldots+g_k\delta_{P_k}$ is the boundary of some $1$-dimensional rectifiable $G$-current with compact support $T$ if and only if $g_1+\ldots+g_k=0$.


\section{Steiner tree Problem revisited}\label{sec:steiner}

In this section we establish the equivalence between the Steiner tree problem and a mass minimization problem in a family of $G$-currents. We firstly need to choose the right group of coefficients $G$.
Once we fix the number $n$ of points in the Steiner problem, we construct a normed vector space $(E,\|\cdot\|_E)$ and a subgroup $G$ of $E$, satisfying the following properties:
\begin{itemize}
  \item [(P1)] there exist $g_1,\ldots,g_{n-1}\in G$ and $h_1,\ldots,h_{n-1}\in E^*$ such that $(g_1,\ldots,g_{n-1})$ with $(h_1,\ldots,h_{n-1})$ is a complete biorthonormal system for $E$ and $G$ is ge\-ne\-ra\-ted by $g_1,\ldots,g_{n-1}$;
  \item [(P2)] $\|g_{i_1}+\ldots+g_{i_k}\|_E=1\;\;\;{\rm whenever}\ 1\le i_1<\ldots<i_k\le n-1\text{ and }k\le n-1$;
  \item [(P3)] $\|g\|_E\geq 1$ for every $g\in G\setminus\{0\}$;
  \item [(P4)] let $\theta=\sum_{j=1}^{n-1}\theta_jg_j$ and $\widetilde{\theta}=\sum_{j=1}^{n-1}\widetilde{\theta}_jg_j$ satisfy the following condition: $0\le\widetilde{\theta}_j\le \theta_j$ when
 $\theta_j\ge 0$ and $0\ge\widetilde{\theta}_j\ge \theta_j$ otherwise. Then $\|\widetilde{\theta}\|_E\leq\|\theta\|_E$.
\end{itemize}

\noindent For the moment we will assume the existence of $G$ and $E$. The proof of their existence and an explicit representation, useful for the computations, is given in Lemma \ref{representation}.



The next lemma has a fundamental role: through it, we can give a nice structure of 1-dimensional rectifiable $G$-current to every suitable competitor for the Steiner tree problem. From now on we will denote $g_n:=-(g_1+\ldots+g_{n-1})$.

\begin{lemma}\label{struct}
Let $B$ 
be a compact and connected set with finite length in $\R^d$, containing the points $p_1,\ldots,p_n$. Then there exists a connected set $B'\subset B$ containing $p_1,\ldots,p_n$ and a 1-dimensional rectifiable $G$-current
$T_{B'}= T(B',\tau,\theta)$, such that
\begin{itemize}
\item[(i)] $\|\theta(x)\|_{E}=1$ for a.e. $x\in B'$,
\item[(ii)] $\partial T_{B'}$ is the $0$-dimensional $G$-current $g_1\delta_{p_1}+\ldots+g_{n}\delta_{p_{n}}$.
\end{itemize}
\end{lemma}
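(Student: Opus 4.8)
The plan is to extract from $B$ a finite tree joining the $p_i$'s, and then to build the current edge by edge, assigning to each edge a subset-sum of the generators $g_1,\ldots,g_{n-1}$, so that property (P2) forces the unit-norm condition (i).

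\emph{Step 1 (geometric input).} Since $B$ is a compact connected set with $\Haus{1}(B)<\infty$, it is rectifiably arcwise connected: between any two of its points there is a simple rectifiable arc contained in $B$ (the classical structure theorem for continua of finite length). Using this, I would construct $B'$ inductively. Set $T_1:=\{p_1\}$; for $i=2,\ldots,n$, choose a simple rectifiable arc in $B$ joining $p_i$ to $T_{i-1}$, cut it at the first point $q_i$ where it meets $T_{i-1}$, and let $T_i$ be the union of $T_{i-1}$ with the resulting sub-arc $\alpha_i$, which touches $T_{i-1}$ only at $q_i$. Then $B':=T_n\subset B$ is connected, contains $p_1,\ldots,p_n$, has finite length, and is a finite topological tree (with rectifiable edges) whose leaves are all among $p_1,\ldots,p_n$, since each attachment point $q_i$ has degree $\ge 2$.

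\emph{Step 2 (defining the current).} Root $B'$ at $p_n$. For each $i\in\{1,\ldots,n-1\}$ let $\sigma_i$ be the unique simple path in $B'$ from $p_n$ to $p_i$, oriented from $p_n$ towards $p_i$, and let $T_{\sigma_i}=T(\sigma_i,\tau_i,g_i)$ be the associated rectifiable $G$-current with constant coefficient $g_i$. I set $T_{B'}:=\sum_{i=1}^{n-1}T_{\sigma_i}$, which is a rectifiable $G$-current supported on $B'$. By the boundary formula for $T(\Gamma,\tau,g)$ recalled at the end of \S\ref{sec:beginning}, we have $\partial T_{\sigma_i}=g_i\delta_{p_i}-g_i\delta_{p_n}$, and summing, using $g_n=-(g_1+\ldots+g_{n-1})$,
$$
\partial T_{B'}=\sum_{i=1}^{n-1}g_i\delta_{p_i}-\Big(\sum_{i=1}^{n-1}g_i\Big)\delta_{p_n}=g_1\delta_{p_1}+\ldots+g_n\delta_{p_n},
$$
which is (ii); note that the boundary contributions cancel at the branch points, so no spurious boundary appears there.

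\emph{Step 3 (unit norm).} It remains to check (i). Fix $x$ in the interior of an edge of $B'$; removing $x$ splits the tree into two components, one containing the root $p_n$ and the other, say $C$, not containing it. A path $\sigma_i$ passes through $x$ exactly when $p_i\in C$, and every such path crosses $x$ in the same direction (away from the root), so the orientations agree and the multiplicity at $x$ is
$$
\theta(x)=\sum_{i\in I}g_i,\qquad I:=\{i\le n-1:\ p_i\in C\}.
$$
Since $C$ is a nonempty subtree it contains at least one leaf, which is a marked point and cannot be $p_n$; hence $I\neq\emptyset$ and $I\subset\{1,\ldots,n-1\}$ consists of distinct indices. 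Property (P2) then yields $\|\theta(x)\|_E=1$. As the finitely many vertices and branch points form an $\Haus{1}$-null set, this proves (i). The genuinely nontrivial input is the geometric Step 1 — that a finite-length continuum is rectifiably arcwise connected and contains a finite tree through the prescribed points; once that structure is in hand, Steps 2 and 3 are bookkeeping, with (P2) doing the work of turning arbitrary subset-sums of generators into unit vectors.
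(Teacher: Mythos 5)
Your proof is correct and follows essentially the same route as the paper's: both use the arcwise connectivity of finite-length continua to build a tree of arcs joining each $p_i$ to the common endpoint $p_n$, assign generator multiplicities along these paths (the paper orients them from $p_i$ to $p_n$ with coefficient $-g_i$, you reverse both, giving the same current), and invoke (P2) to get unit norm from the resulting subset-sums of $g_1,\ldots,g_{n-1}$. Your Step 3 merely makes explicit the subset-sum and orientation-agreement argument that the paper compresses into the phrase ``condition (i) is a consequence of (P2)''.
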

\begin{Proof}
Since $B$ is a connected, compact set of finite length, then $B$ is connected by paths of finite length (see Lemma 3.12 of \cite{falconer}). Consider a curve $B_1$ which is the image of an injective path
contained in $B$ going from $p_1$ to $p_n$
and associate to it the rectifiable $G$-current $T_1$ with multiplicity $-g_1$, as explained in \S\ref{sec:beginning}. Repeat this procedure keeping the end-point $p_n$ and replacing at each step $p_1$ with
$p_2,\ldots,p_{n-1}$.
To be precise, in this procedure, as soon as a curve $B_i$ intersects an other curve $B_j$ with $j<i$, we force $B_i$ to coincide with $B_j$ from that intersection point to the end-point $p_n$.
The set $B'=B_1\cup\ldots\cup B_{n-1}\subset B$ is a connected set containing $p_1,\ldots,p_n$ and the $1$-dimensional rectifiable $G$-current $T=T_1+\ldots+T_{n-1}$ satisfies the requirements of the lemma,
in particular condition (i) is a consequence of (P2).
\end{Proof}

Via the next lemma (Lemma \ref{connect}), we can say that solutions to the mass minimization problem defined in Theorem \ref{thm:main_steiner} have connected supports. For the proof we need the following theorem
on the structure of classical integral $1$-currents. This theorem has been firstly stated as a corollary of Theorem 4.2.25 in \cite{Fed}. It allows us to consider an integral $1$-current as a countable sum of
oriented simple Lipschitz curves with integer multiplicities.

\begin{theorem}\label{thm:struct_1currents}
Let $T$ be an integral $1$-current in $\R^d$, then
\begin{equation}\label{decomp_1current}
T=\sum_{k=1}^K T_k +\sum_{\ell=1}^{\infty} C_\ell\,,
\end{equation}
with
\begin{enumerate}
\item[(i)] $T_k$ are integral $1$-currents associated to injective Lipschitz paths, for every $k=1,\ldots,K$ and $C_\ell$ are integral $1$-currents associated to Lipschitz paths which have the same value at 0 and 1
and are injective on $(0,1)$, for every $\ell\ge 1$;
\item[(ii)] $\partial C_\ell=0$ for every $\ell\ge 1$.
\end{enumerate}
Moreover
\begin{equation}\label{spezza_massa}
\mathds{M}(T)=\sum_{k=1}^K \mathds{M}(T_k)+\sum_{\ell=1}^{\infty}\mathds{M}(C_\ell)
\end{equation}
and
\begin{equation}\label{spezza_massa_bordo}
\mathds{M}(\partial T)=\sum_{k=1}^K \mathds{M}(\partial T_k)\ .
\end{equation}
\end{theorem}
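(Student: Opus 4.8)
The plan is to deduce the statement from the decomposition of a general integral current into \emph{indecomposable} pieces, which is precisely the content of Theorem 4.2.25 of \cite{Fed}, and then to recognize that the indecomposable integral $1$-currents are exactly the simple oriented Lipschitz curves appearing in \eqref{decomp_1current}. Recall that an integral current $S$ is indecomposable if there is no integral current $R$ with $R\neq 0$, $R\neq S$, $\mathds{M}(S)=\mathds{M}(R)+\mathds{M}(S-R)$ and $\mathds{M}(\partial S)=\mathds{M}(\partial R)+\mathds{M}(\partial(S-R))$. Federer's theorem provides an at most countable family of indecomposable integral $1$-currents $S_i$ with $T=\sum_i S_i$, $\mathds{M}(T)=\sum_i\mathds{M}(S_i)$ and $\mathds{M}(\partial T)=\sum_i\mathds{M}(\partial S_i)$. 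The first step is therefore just to quote this result.

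The crucial step, which I expect to be the main obstacle, is the characterization of the indecomposable pieces: every indecomposable integral $1$-current $S$ is the current associated (built as in \S\ref{sec:beginning}) to a Lipschitz path $\gamma$ with multiplicity $\pm 1$, where $\gamma$ is either injective, so that $S$ is an arc with $\partial S=\delta_{\gamma(1)}-\delta_{\gamma(0)}$ and $\mathds{M}(\partial S)=2$, or injective on $(0,1)$ with $\gamma(0)=\gamma(1)$, so that $S$ is a loop with $\partial S=0$. I would prove this by ``following the orientation'': using the representation of $S$ through its unit orienting field $\tau$, one builds a path by flowing along $\tau$, starting from a point in the support of $\partial S$ when $\partial S\neq 0$ and from an arbitrary point of the support of $S$ otherwise. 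Indecomposability is then used twice: to exclude multiplicities of modulus $\geq 2$ (a doubly traversed curve splits with additive mass and boundary mass) and to force the constructed simple curve to exhaust $S$ (a proper admissible subcurve would again furnish a forbidden splitting). This is the genuinely one-dimensional part of the argument and the place where one relies most on \cite{Fed}.

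It remains to reorganize the $S_i$ and check the mass identities. Denote by $T_1,\ldots$ those $S_i$ whose generating path is injective and by $C_1,\ldots$ those whose path is a loop; this gives \eqref{decomp_1current}, and $\partial C_\ell=0$ yields (i) and (ii). The mass formula \eqref{spezza_massa} is exactly the additivity of mass provided by Theorem 4.2.25. Since $\mathds{M}(\partial C_\ell)=0$, the additivity of boundary mass reduces to \eqref{spezza_massa_bordo}. Finally, the number $K$ of arcs is finite: each arc satisfies $\mathds{M}(\partial T_k)=2$, hence $2K\leq\sum_k\mathds{M}(\partial T_k)\leq\mathds{M}(\partial T)<+\infty$, where $\mathds{M}(\partial T)<+\infty$ because $T$ is an integral current; the loops form an at most countable family since $\sum_\ell\mathds{M}(C_\ell)\leq\mathds{M}(T)<+\infty$.
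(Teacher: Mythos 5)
Your proposal is correct and takes essentially the same route as the paper, which simply derives this statement as a corollary of Theorem 4.2.25 of \cite{Fed}. Note that the characterization of indecomposable integral $1$-currents as unit-multiplicity arcs and loops, which you single out as the main obstacle, is itself already contained in Federer's Theorem 4.2.25, so the only work genuinely left to you is the bookkeeping of your final paragraph (in particular the finiteness of $K$ via $2K\le\mathds{M}(\partial T)<+\infty$, which you handle correctly).
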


\begin{lemma}\label{connect}
Let $T=T(\Sigma,\tau,\theta)$ be a $1$-dimensional rectifiable $G$-current such that $\partial T$ is the $0$-current $g_1\delta_{p_1}+\ldots+g_n\delta_{p_n}$. Then there exists a rectifiable $G$-current
$\widetilde T=T(\widetilde{\Sigma},\widetilde{\tau},\widetilde{\theta})$ such that
\begin{itemize}
\item[(i)] $\partial\widetilde T=\partial T=g_1\delta_{p_1}+\ldots+g_n\delta_{p_n}$;
\item[(ii)]${\rm{supp}}(\widetilde{T})$ is a connected $1$-rectifiable set containing $\left\{p_1,\ldots,p_n\right\}$ and it is contained in ${\rm{supp}}(T)$;
\item[(iii)] $\Haus 1({\rm{supp}}(\widetilde{T})\setminus\widetilde\Sigma)=0$;
\item[(iv)] $\mathds{M}(\widetilde T)\le \mathds{M}(T)$ and, if equality holds, then ${\rm{supp}}(T)={\rm{supp}}(\widetilde{T})$.
\end{itemize}
\end{lemma}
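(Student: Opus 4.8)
\emph{Strategy.} The plan is to obtain $\widetilde T$ by \emph{restricting} $T$ to the connected component of its support that contains all the marked points: restriction to a subset of the support can only lower the mass and, if the complementary piece turns out to be a cycle, it leaves the boundary unchanged. The real work is then to check that the support of the restriction is genuinely connected and agrees, up to an $\Haus1$-null set, with a rectifiable carrier. First I would pass to components. Writing $T=(T^1,\ldots,T^m)$ as in Definition \ref{def_current}, and using the biorthonormal system $(g_1,\ldots,g_{n-1})$, $(h_1,\ldots,h_{n-1})$ from (P1) together with $g_n=-(g_1+\ldots+g_{n-1})$, one computes that the $j$-th component of the boundary is $\langle h_j;g_i\rangle=\delta_{ij}$ for $i\le n-1$ and $\langle h_j;g_n\rangle=-1$, so that each $T^j$ is a classical integral $1$-current with $\partial T^j=\delta_{p_j}-\delta_{p_n}$.

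\emph{Locating the points in one component.} Next I would apply the structure Theorem \ref{thm:struct_1currents} to each $T^j$, writing $T^j=\sum_k T_k+\sum_\ell C_\ell$. Since $\mathds{M}(\partial T^j)=\mathds{M}(\delta_{p_j}-\delta_{p_n})=2$ and \eqref{spezza_massa_bordo} forbids cancellation in the boundaries, exactly one of the injective paths survives with boundary: there is a single simple Lipschitz path $L_j$, of multiplicity one, running from $p_n$ to $p_j$, while all other summands are cycles. Each $L_j$ is connected, lies in ${\rm supp}(T^j)\subseteq{\rm supp}(T)$, and joins $p_n$ to $p_j$; hence $p_n$ and every $p_j$ lie in a single connected component $E_0$ of ${\rm supp}(T)$, so $\{p_1,\ldots,p_n\}\subset E_0$.

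\emph{Restriction.} I would then set $\widetilde T:=T\trace E_0$. The mass statement (iv) is immediate from Proposition \ref{prop_massmult}: since $E_0$ and ${\rm supp}(T)\setminus E_0$ are disjoint, the integral of $\|\theta\|_E$ splits as $\mathds{M}(T)=\mathds{M}(\widetilde T)+\mathds{M}\bigl(T\trace({\rm supp}(T)\setminus E_0)\bigr)$, so $\mathds{M}(\widetilde T)\le\mathds{M}(T)$, with equality precisely when the removed piece is $\Haus1$-null, i.e.\ when the supports coincide. For the boundary (i), the removed current $R:=T-\widetilde T$ has $\partial\widetilde T$ and $\partial R$ supported in the disjoint closed sets $E_0$ and ${\rm supp}(T)\setminus E_0$; since their sum $\partial T$ is carried by $\{p_1,\ldots,p_n\}\subset E_0$, there is no cancellation and $\partial R$ must be supported in $\{p_i\}\cap({\rm supp}(T)\setminus E_0)=\emptyset$. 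Thus $R$ is a cycle and $\partial\widetilde T=\partial T$.

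\emph{The main obstacle: connectedness and (iii).} What remains, and is the delicate part, is to upgrade ``${\rm supp}(\widetilde T)$ is contained in the connected set $E_0$'' to the precise assertions (ii) and (iii). Here I would use that all supports in play have finite $\Haus1$-measure, so a compact connected one is automatically arcwise connected by Lemma~3.12 of \cite{falconer}; after discarding any remaining $\Haus1$-null strands and any stray boundaryless pieces of the support, the carrier $\widetilde\Sigma$ can be taken to satisfy $\Haus1({\rm supp}(\widetilde T)\setminus\widetilde\Sigma)=0$, which is (iii), while ${\rm supp}(\widetilde T)$ stays connected and contains $\{p_1,\ldots,p_n\}$, which is (ii). The equality clause of (iv) then closes the argument: if $\mathds{M}(\widetilde T)=\mathds{M}(T)$ the removed piece has zero length, forcing ${\rm supp}(T)={\rm supp}(\widetilde T)$. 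I expect the genuinely technical points to be exactly this reconciliation of the measure-theoretic support with the rectifiable carrier, and the verification that restricting to the component $E_0$ keeps the boundary mass finite; both are where the finite-length geometry, rather than abstract current theory, does the work.
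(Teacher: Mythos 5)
Your first half is sound and matches the paper: passing to components, computing $\partial T^j=\delta_{p_j}-\delta_{p_n}$, and using Theorem \ref{thm:struct_1currents} with \eqref{spezza_massa_bordo} to extract a single injective path $L_j$ from $p_n$ to $p_j$ inside ${\rm supp}(T)$ does place all of $p_1,\ldots,p_n$ in one connected component $E_0$ of the support. The gap is in the definition $\widetilde T:=T\trace E_0$: this current keeps every cycle of $T$ whose support lies in $E_0$, and that breaks (ii) and (iii). Concretely, take $n=2$, let $L$ be a segment with multiplicity $g_1$ joining $p_1$ to $p_2$, let $C$ be a circle with multiplicity $g_1$ disjoint from $L$, let $A$ be an arc joining $L$ to $C$ that carries no mass of $T$, and let the remaining cycles of $T$ be small pairwise disjoint circles, disjoint from $L\cup A\cup C$, accumulating exactly onto $A$. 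Then $A\subset{\rm supp}(T)$, the component of ${\rm supp}(T)$ containing $p_1,p_2$ is $E_0=L\cup A\cup C$ (each small circle is its own component), and $T\trace E_0=T\trace(L\cup C)$ has disconnected support $L\cup C$, so (ii) fails; the variant in which the small circles are tangent to $A$ makes ${\rm supp}(\widetilde T)\supseteq A$ while $\Haus{1}(A\cap\widetilde\Sigma)=0$, so (iii) fails. Your closing remedy, ``discarding remaining null strands and stray boundaryless pieces,'' is exactly the cycle removal that the paper takes as the \emph{definition} of $\widetilde T$ (it sets $\widetilde T^j:=T^j_1$, i.e.\ it keeps only the paths $L_j$ and discards all of $\sum_\ell C^j_\ell$); that step is the content of the lemma and cannot be deferred to a remark. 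A secondary flaw: your boundary argument treats $E_0$ and ${\rm supp}(T)\setminus E_0$ as disjoint closed sets, but the complement of a connected component need not be closed (other components may accumulate on $E_0$); the correct route to $\partial\widetilde T=\partial T$ is again the structure theorem, since each summand of the decomposition has connected support and is therefore either entirely kept or entirely discarded.

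Be aware also of what the restriction was silently buying you and what you lose on the correct route. For $T\trace E_0$ the inequality in (iv) is a triviality, since by Proposition \ref{prop_massmult} the mass splits over disjoint Borel sets. Once $\widetilde T$ is instead defined by deleting the cycles componentwise, the pointwise multiplicity changes from $\theta=\sum_j\theta_j g_j$ to $\widetilde\theta=\sum_j\widetilde\theta_j g_j$, and since $\|\cdot\|_E$ is not additive over components one cannot conclude $\|\widetilde\theta\|_E\le\|\theta\|_E$ without an argument. The paper supplies it: \eqref{spezza_massa} forces each cycle $C^j_\ell$ to be coherently oriented with $T^j_1$ on any overlap of positive length, so $\widetilde\theta_j$ lies between $0$ and $\theta_j$, and property (P4) then gives the pointwise mass comparison; the same additivity statement yields the rigidity clause of (iv). Your proposal, as written, contains neither this argument nor a substitute for it.
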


\begin{Proof}
Let $T^j=T(\Sigma^j,\tau^j,\theta^j)$ be the components of $T$, for $j=1,\ldots,n-1$ (with  respect to the biorthonormal system $(g_1,\ldots,g_{n-1})$, $(h_1,\ldots,h_{n-1})$).

For every $j$, we can use Theorem \ref{thm:struct_1currents} and write
$$
T^j=\sum_{k=1}^{K_j}T^j_k+\sum_{\ell=1}^{\infty}C^j_\ell\ .
$$
Moreover, since $\partial T^j=\delta_{p_j}-\delta_{p_n}$, by (\ref{spezza_massa_bordo}), we have $K_j=1$ for every $j$. We choose $\widetilde T$ the rectifiable $G$-current whose components are $\widetilde T^j:=T_1^j$.

Because of \eqref{spezza_massa}, we have ${\rm{supp}}(\widetilde T^j)\subset{\rm{supp}}(T^j)$ (the cyclic part of $T^j$ never cancels the acyclic one).

Property (i) is easy to check. Property (iii) is also easy to check, because the corresponding property holds for every component $\widetilde{T}^j$. To prove property (ii), it is sufficient to observe that
$\widetilde T$ is a finite sum of currents associated to oriented curves with multiplicities, having the point $p_n$ in the support and that, by (P1), $g_1,\ldots,g_{n-1}$ are linearly independent, hence the support of $\widetilde T$
is the union of the supports of $\widetilde{T}^j$. The inequality in property (iv) follows from (\ref{spezza_massa}) and from property (P4): indeed (\ref{spezza_massa}) implies that for
every index $\ell$ such that the support of $C^j_{\ell}$ intersects the support of $T^j_1$ in a set of positive length, then $\Haus{1}$-a.e. on this set the orientation of $C^j_{\ell}$ coincide with the
orientation of $T^j_1$. Moreover, if $\mathds{M}(\widetilde T) = \mathds{M}(T)$, then (\ref{spezza_massa}) implies that every cycle $C_{\ell}^j$ is supported in
${\rm{supp}}(\widetilde T)$, hence the second part of (iv) follows.
\end{Proof}

Before stating the main theorem, let us point out that the existence of a solution to the mass minimization problem is a consequence of Theorem \ref{thm_rectifiability}.

\begin{theorem}\label{thm:main_steiner}
Assume that $T_0=T(\Sigma_0,\tau_0,\theta_0)$ is a mass-minimizer among all $1$-dimensional rectifiable $G$-currents with boundary
$$
B=g_1\delta_{p_1}+\ldots+g_n\delta_{p_n}\ .
$$
Then $S_0:={\rm supp}(T_0)$ is a solution of the Steiner tree problem. Conversely, given a set $C$ which is a solution of the Steiner problem for the points $p_1,\ldots,p_n$, there exists a canonical 1-dimensional
$G$-current, supported on $C$, minimizing the mass among the currents with boundary $B$.
\end{theorem}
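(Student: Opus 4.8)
The plan is to prove the two implications separately, using Lemma \ref{struct} to turn connected competitors for the Steiner problem into $G$-currents of equal mass, Lemma \ref{connect} to guarantee that mass-minimizers may be taken with connected support, and properties (P2)--(P3) to pass back and forth between mass and length. The whole argument rests on a pair of inequalities. First, for any $1$-dimensional rectifiable $G$-current $T=T(\Sigma,\tau,\theta)$ (with $\theta\neq 0$ $\Haus{1}$-a.e. on its carrier, which we may always assume), Proposition \ref{prop_massmult} together with (P3) gives $\Haus{1}(\Sigma)\le\mathds{M}(T)$, since $\|\theta\|_E\ge 1$ where $\theta\neq 0$. Second, Lemma \ref{struct} applied to a connected competitor produces a current whose multiplicity has norm exactly $1$ a.e. by part (i) of that lemma, so that its mass equals the length of its support. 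These two facts let one sandwich the length of a minimizer's support between masses.

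For the direct implication I would start from a mass-minimizer $T_0$ and apply Lemma \ref{connect} to obtain $\widetilde{T_0}$ with the same boundary $B$ and $\mathds{M}(\widetilde{T_0})\le\mathds{M}(T_0)$. Minimality forces equality, so part (iv) of Lemma \ref{connect} gives $S_0=\mathrm{supp}(T_0)=\mathrm{supp}(\widetilde{T_0})$, which by (ii) is connected and contains $p_1,\ldots,p_n$, and by (iii) differs from the carrier $\widetilde\Sigma_0$ by an $\Haus{1}$-null set. The first inequality above then yields $\Haus{1}(S_0)\le\mathds{M}(\widetilde{T_0})=\mathds{M}(T_0)$. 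Now for an arbitrary connected competitor $B$ of finite length, Lemma \ref{struct} furnishes a current $T_{B'}$ with $\partial T_{B'}=B$ and $\mathds{M}(T_{B'})=\Haus{1}(B')\le\Haus{1}(B)$, while minimality of $T_0$ gives $\mathds{M}(T_0)\le\mathds{M}(T_{B'})$. Chaining, $\Haus{1}(S_0)\le\mathds{M}(T_0)\le\Haus{1}(B)$, so $S_0$ is a connected set containing the points of least length, i.e. a Steiner solution.

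For the converse, given a Steiner solution $C$, I would let Lemma \ref{struct} produce the ``canonical'' current $T_{C'}=T(C',\tau,\theta)$ supported on $C'\subseteq C$, with $\partial T_{C'}=B$ and $\mathds{M}(T_{C'})=\Haus{1}(C')\le\Haus{1}(C)$. To see it is mass-minimizing, take any $T$ with $\partial T=B$ and $\mathds{M}(T)<\infty$; Lemma \ref{connect} yields $\widetilde T$ of no larger mass whose support is connected, contains the points, and has finite length (again by the first inequality, via (iii) and (P3)). Hence $\mathrm{supp}(\widetilde T)$ is a Steiner competitor, so $\Haus{1}(C)\le\Haus{1}(\mathrm{supp}(\widetilde T))\le\mathds{M}(\widetilde T)\le\mathds{M}(T)$; combined with $\mathds{M}(T_{C'})\le\Haus{1}(C)$ this gives $\mathds{M}(T_{C'})\le\mathds{M}(T)$ for every competitor, so $T_{C'}$ minimizes the mass.

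The main obstacle I anticipate is not a single estimate but ensuring the reductions to connected supports are legitimate and that the mass--length comparison is sharp in \emph{both} directions: the bound $\Haus{1}(\Sigma)\le\mathds{M}(T)$ relies on (P3), while the matching construction relies on (P2) (through Lemma \ref{struct}(i)) to guarantee that the glued current carries multiplicities of norm exactly $1$ along the whole competitor, so that overlaps of the curves $B_1,\ldots,B_{n-1}$ create no spurious mass. A point worth checking carefully is admissibility of the boundary datum, namely that $g_n=-(g_1+\ldots+g_{n-1})\neq 0$ and that each $p_i$ lies in the relevant supports; both follow from (P2) and from $\mathrm{supp}(\partial T)\subseteq\mathrm{supp}(T)$.
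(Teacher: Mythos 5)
Your proposal is correct and follows essentially the same route as the paper: both directions rest on the chain of inequalities obtained by combining Lemma \ref{struct} (a connected competitor yields a $G$-current of mass equal to its length, via (P2) and Proposition \ref{prop_massmult}), Lemma \ref{connect} (reduction to connected support without increasing mass), and (P3) (mass dominates length of the carrier). The paper phrases the converse as a proof by contradiction, but the underlying comparison $\Haus{1}(\mathrm{supp}(\widetilde T))\le\mathds{M}(\widetilde T)\le\mathds{M}(T)$ versus $\mathds{M}(T_{C'})\le\Haus{1}(C)$ is identical to yours.
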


\begin{Proof}
Since $T_0$ is a mass minimizer, then the mass of $T_0$ must coincide with that of the current $\widetilde{T_0}$ given by the Lemma \ref{connect}. In particular, properties (ii) and (iv) of Lemma \ref{connect}
guarantee that $S_0$ is a connected set.

Let $S$ be a competitor for the Steiner tree problem and let $S'$ and $T_{S'}$ be the connected set and the rectifiable $1$-current given by Lemma \ref{struct}, respectively. Hence we have
\begin{equation*}
\Haus{1}(S)\ge\Haus{1}(S')\stackrel{{\rm{(i)}}}{=}\mathds{M}(T_{S'})\stackrel{{\rm{(ii)}}}{\ge}\mathds{M}(T_0)\stackrel{{\rm{(iii)}}}{\ge}\Haus{1}(\Sigma_0)\stackrel{{\rm{(iv)}}}{=}\Haus{1}(S_0)\ ,
\end{equation*}
indeed
\begin{itemize}
\item[(i)] thanks to the second property of Lemma \ref{struct} and Proposition \ref{prop_massmult}, we obtain
$$
\mathds{M}(T_{S'})=\int_{S'}\|\theta_{S'}(x)\|_E\,{\rm{d}}\Haus{1}(x)=\Haus{1}(S')\ ;
$$
\item[(ii)] we assumed that $T_0$ is a mass-minimizer;
\item[(iii)] from property (P3), we get
$$
\mathds{M}(T_0)=\int_{\Sigma_0}\|\theta_0(x)\|_E\,{\rm{d}}\Haus{1}(x)\ge\int_{\Sigma_0}1\,{\rm{d}}\Haus{1}(x)=\Haus{1}(\Sigma_0)\ ;
$$
\item[(iv)] is property (iii) in Lemma \ref{connect}.
\end{itemize}

To prove the second part of the theorem, apply Lemma \ref{struct} to the set $C$. Notice that with the procedure described in the lemma, the rectifiable $G$-current $T_{C'}$ is uniquely determined,
because for every point $p_i$, $C$ contains exactly one path from $p_i$ to $p_n$, in fact it is well-known that solutions of the Steiner tree problem cannot contain cycles; this explains the adjective
``canonical''. Assume by contradiction there exists a 1-dimensional $G$-current $T$ with $\partial T=B$ and $\mathds{M}(T)<\mathds{M}(T_{C'})$. The 1-dimensional $G$-current $\widetilde T$ obtained applying Lemma
\ref{connect} to $T$ has a connected $1$-rectifiable support containing $\left\{p_1,\ldots,p_n\right\}$ and satisfies
$$\Haus{1}({\rm{supp}}(\widetilde T))\leq\mathds{M}(\widetilde T)\leq \mathds{M}(T)<\mathds{M}(T_{C'})=\Haus{1}({\rm{supp}}(T_{C'})\leq\Haus{1}(C),$$
which is a contradiction.
\end{Proof}
\begin{remark}
\rm{The proof given in the previous theorem shows in particular that the solutions of the mass minimization problem do not depend on the choice of $E$ and $G$, but are universal for every $G$ and $E$
satisfying \rm{(P1)-(P4)}.}
\end{remark}

Eventually, we give an explicit representation for $G$ and $E$.

\begin{lemma}\label{representation}
For every $n\in\N$ there exist a normed vector space $(E,\|\cdot\|_E)$ and a subgroup $G$ of $E$ satisfying {\rm{(P1)-(P4)}}.
\end{lemma}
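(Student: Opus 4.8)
The plan is to exhibit one explicit pair $(E,G)$ and then verify the four properties by direct computation; the construction is essentially forced, up to isomorphism, by the requirement (P2) that every partial sum of distinct generators be a unit vector. I would take
$$
E := \R^n/\R(1,\ldots,1),\qquad G := \Z^n/\Z(1,\ldots,1),
$$
writing $[a]$ for the class of $a=(a_1,\ldots,a_n)$, and equip $E$ with the ``range'' norm
$$
\|[a]\|_E := \max_{1\le i\le n} a_i-\min_{1\le i\le n} a_i,
$$
which is well defined on the quotient because adding a multiple of $(1,\ldots,1)$ leaves the range unchanged, and which vanishes only on the class of constant vectors, i.e.\ on $[0]$. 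As generators I would set $g_i:=[e_i]$ for $i=1,\ldots,n-1$ (so that indeed $g_n=-(g_1+\cdots+g_{n-1})=[e_n]$), and as candidate dual system $h_i:=e_i^*-e_n^*\in E^*$, noting that $\langle h_i;[a]\rangle=a_i-a_n$ is well defined on $E$.

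Then I would check the properties in order. For (P1): the $g_i$ form a basis of the $(n-1)$-dimensional space $E$, one computes $\langle h_i;g_j\rangle=\delta_{ij}-\delta_{nj}=\delta_{ij}$ for $j\le n-1$, and both $\|g_i\|_E=1$ and $\|h_i\|_{E^*}=1$ follow by evaluating the range norm and its dual on $e_i$ (the supremum defining $\|h_i\|_{E^*}$ is attained at $[e_i]$). For (P2): a partial sum $g_{i_1}+\cdots+g_{i_k}$ with indices in $\{1,\ldots,n-1\}$ equals $[\chi_S]$ for $S=\{i_1,\ldots,i_k\}$; since $S$ is nonempty and omits the index $n$, the vector $\chi_S$ has both a coordinate equal to $1$ and a coordinate equal to $0$, so its range, hence its norm, is exactly $1$. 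Property (P3) is then immediate from (P1): if $[a]\ne 0$ with $a\in\Z^n$, then $a$ is non-constant, so $\|[a]\|_E\ge|\langle h_i;[a]\rangle|=|a_i-a_n|\ge 1$ for a suitable index $i$, two integer coordinates differing by at least $1$.

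The only property requiring a short argument is (P4), the monotonicity of the norm under coordinatewise shrinking toward $0$. Writing $\theta=[u]$ and $\widetilde\theta=[\widetilde u]$ with $u=(\theta_1,\ldots,\theta_{n-1},0)$ and $\widetilde u=(\widetilde\theta_1,\ldots,\widetilde\theta_{n-1},0)$, the hypothesis says that each $\widetilde u_j$ lies between $0$ and $u_j$, while $u_n=\widetilde u_n=0$. Because both vectors carry a zero coordinate, the pointwise bounds $\min(u_j,0)\le\widetilde u_j\le\max(u_j,0)$ give $\max_i\widetilde u_i\le\max_i u_i$ and $\min_i\widetilde u_i\ge\min_i u_i$ simultaneously, whence $\|\widetilde\theta\|_E\le\|\theta\|_E$. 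This is the one place where the specific shape of the range norm (rather than merely the values it takes on the $g_i$) is used, and I would expect it to be the only genuinely delicate point; everything else is a direct evaluation. Finally, for the ``explicit representation useful for computations'' promised in the statement, I would remark that $E$ is isometric to the hyperplane $\{y\in\R^n:\sum_i y_i=0\}$ carrying the same range norm, whose unit ball is the orthogonal projection of the cube $[0,1]^n$ along the diagonal; the $g_i$ and their partial sums are precisely its vertices, which explains geometrically why (P2) holds.
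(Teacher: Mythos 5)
Your construction is exactly the one in the paper: the paper realizes the quotient $\R^n/\R(1,\ldots,1)$ concretely as the hyperplane $\{v\cdot{\bf e}_n=0\}$ via the map $\phi(u)=(u_1-u_n,\ldots,u_{n-1}-u_n,0)$ and endows it with the same range (max minus min) norm, with $g_i=\phi({\bf e}_i)$ and $h_i={\rm d}x_i$, which is isometrically the same object as yours. Your verification of (P1)--(P4), which the paper leaves to the reader as ``easy to check,'' is correct.
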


\begin{Proof}
Let ${\bf e}_1,\ldots,{\bf e}_n$ be the standard basis of $\R^n$ and ${\rm{d}}x_1,\ldots {\rm{d}}x_n$ be the dual basis. Consider
$$
E:=\{v\in\R^n\,:\,v\cdot{\bf e}_n=0\}
$$
and the homomorphism $\phi:\R^n\to E$ such that
\begin{equation}\label{omo_phi}
\phi(u_1,\ldots,u_n):=(u_1-u_n,\ldots,u_{n-1}-u_n,0)\ .
\end{equation}

Consider on $\R^n$ the seminorm
$$
\|u\|_\star:=\max_{i=1,\ldots,n} u\cdot{\bf e}_i-\min_{i=1,\ldots,n} u\cdot{\bf e}_i\ .
$$
and observe that $\|\cdot\|_\star$ induces via $\phi$ a norm on $E$ that we denote $\|\cdot\|_E$.
For every $i=1,\ldots,n-1$, define $g_i:=\phi({\bf e}_i)$ and define $g_n:=-(g_1+\ldots+g_{n-1})$.
Let $G$ be the subgroup of $E$ generated by $g_1,\ldots,g_{n-1}$. For every $i=1,\ldots,n-1$ denote by $h_i$ the element ${\rm{d}}x_i$ of $E^*$.
The pair $(g_1,\ldots,g_{n-1})$, $(h_1,\ldots,h_{n-1})$ is a biorthonormal system and properties (P1)-(P4) are easy to check.
\end{Proof}


\begin{remark}\label{normestar}
\rm{ The norm $\|\cdot\|_{E^*}$ of an element $w=w_1 h_1+\ldots w_{n-1}h_{n-1}\in E^*$ can be characterized in the following way: let us abbreviate $w^P:=\sum_{i=1}^{n-1}(w_i\vee 0)$ and
$w^N:=-\sum_{i=1}^{n-1}(w_i\wedge 0)$ and, for every $v=(v_1,\ldots,v_{n-1},0)\in E$ with $\|v\|_E=1$,  $\lambda(v):=\max_{i=1,\ldots,n-1} (v_i\vee 0)\in[0,1]$, then
\begin{multline}\label{char_norm_Estar}
\|w\|_{E^*}=\sup_{\|v\|_E=1}\sum_{i=1}^{n-1} w_i v_i=\sup_{\|v\|_E=1}[\lambda(v)w^P+(1-\lambda(v))w^N]\\
=\sup_{\lambda\in[0,1]}[(\lambda w^P+(1-\lambda) w^N]=w^P\vee w^N\ .
\end{multline}
Moreover we can also notice that, according to this representation of $E$ and $G$, the only extreme points of the unit ball in $E$ are all the points of $G$ of unit norm, i.e. all the points $g$ of the type
 $g=\pm (g_{i_1}+\ldots+g_{i_k})$ such that $\ 1\le i_1<\ldots<i_k\le n-1\text{ and }k\le n-1$.}
\end{remark}




\section{Calibrations}\label{sec:calibrations}


As we recalled in the Introduction, our interest in calibrations is the reason why we have chosen to provide an integral representation for $E$-currents, indeed the existence of a calibration guarantees the
minimality of the associated current, as we will see in Proposition \ref{prop:calib}.

\begin{definition}\label{def_calib}
{\rm
A smooth calibration associated with a $k$-dimensional rectifiable $G$-current $T(\Sigma,\tau,\theta)$ in $\R^d$ is a smooth compactly supported $E^*$-valued differential $k$-form $\omega$, with the following
properties:
\begin{enumerate}
\item[(i)]$\langle\omega(x);\tau(x),\theta(x)\rangle=\|\theta(x)\|_E$ for $\Haus{k}$-a.e. $x\in\Sigma$;
\item[(ii)] ${\rm{d}}\omega=0$;
\item[(iii)]$\|\omega\|\leq 1$, where $\|\omega\|$ is the comass of $\omega$, defined in (\ref{norm_kform}).
\end{enumerate}
}
\end{definition}
\begin{proposition}\label{prop:calib}
A rectifiable $G$-current $T$ which admits a smooth calibration $\omega$ is a minimizer for the mass among the normal $E$-currents with boundary $\partial T$.

\end{proposition}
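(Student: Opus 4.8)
The plan is to run the classical calibration argument, transposed to the setting of $E$-currents. Fix any normal $E$-current $T'$ with $\partial T'=\partial T$ (recall that $T$ itself, being a rectifiable $E$-current of finite mass with a finite-mass boundary, belongs to this competitor class). I would establish minimality through the chain
$$\mathds{M}(T)=\langle T;\omega\rangle=\langle T';\omega\rangle\le\mathds{M}(T'),$$
and then verify the three relations separately. The outer two are soft consequences of the defining properties of a calibration, while the middle equality carries the real content.

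For the left equality, the integral representation of the rectifiable $E$-current $T=T(\Sigma,\tau,\theta)$ gives
$$\langle T;\omega\rangle=\int_\Sigma\langle\omega(x);\tau(x),\theta(x)\rangle\,{\rm d}\Haus{k}(x)=\int_\Sigma\|\theta(x)\|_E\,{\rm d}\Haus{k}(x)=\mathds{M}(T),$$
where the second equality is property (i) of Definition \ref{def_calib} and the third is the mass formula of Proposition \ref{prop_massmult} (stated for $k=1$, the case of interest here; the same computation, using that the orienting $k$-vector of a rectifiable set is simple, covers any $k$). For the right inequality, property (iii) gives $\|\omega\|\le 1$, so $\langle T';\omega\rangle\le\mathds{M}(T')$ is immediate from the definition $\mathds{M}(T')=\sup_{\|\omega'\|\le 1}\langle T';\omega'\rangle$.

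The crux is the middle equality $\langle T;\omega\rangle=\langle T';\omega\rangle$, i.e. $\langle R;\omega\rangle=0$ for $R:=T-T'$, a normal $E$-current with $\partial R=0$. Here I would exploit that $\omega$ is closed \emph{and} compactly supported. Working component-wise (Remark \ref{rmk:components}), each classical $k$-form $\omega^j=\omega(\cdot,v_j)$ is smooth, compactly supported and closed, since ${\rm d}\omega=0$ means ${\rm d}\omega^j=0$ for every $j$. Because $1\le k<d$ (the currents are $1$-dimensional in $\R^d$ with $d\ge 2$), the compactly supported de Rham cohomology $H^k_c(\R^d)$ vanishes, so each $\omega^j$ admits a compactly supported smooth primitive $\eta^j$ with $\omega^j={\rm d}\eta^j$; assembling $\eta:=(\eta^1,\ldots,\eta^m)$ produces a compactly supported $E^*$-valued $(k-1)$-form with ${\rm d}\eta=\omega$. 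The definition of the boundary then yields
$$\langle R;\omega\rangle=\langle R;{\rm d}\eta\rangle=\langle\partial R;\eta\rangle=0,$$
which closes the argument. The one genuinely delicate point is exactly this construction of a \emph{compactly supported} primitive: had $\omega$ been merely closed, the ordinary Poincaré lemma would furnish a primitive without decay, which is inadmissible in the pairing $\langle\partial R;\eta\rangle$ because the boundary of an $E$-current is tested only against compactly supported forms. The compact-support hypothesis in Definition \ref{def_calib}, combined with $k<d$, is precisely what removes this obstacle, and I would flag that the argument would break for top-dimensional currents or for calibrations that are not compactly supported.
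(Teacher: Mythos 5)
Your proof is correct, and the outer steps coincide with the paper's; the genuine difference is in how you obtain the central equality $\langle T;\omega\rangle=\langle T';\omega\rangle$. The paper argues on the current side: since $\partial(T-T')=0$, it writes $T-T'=\partial S$ for some $E$-current $S$ and concludes $\langle T-T';\omega\rangle=\langle S;{\rm d}\omega\rangle=0$ from closedness of $\omega$. You argue on the form side: closedness plus compact support of $\omega$ and the vanishing of $H^k_c(\R^d)$ for $1\le k<d$ give a compactly supported primitive $\eta$ with ${\rm d}\eta=\omega$, whence $\langle T-T';\omega\rangle=\langle\partial(T-T');\eta\rangle=0$. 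The two arguments are dual to one another. Yours has the merit that the homological input is elementary and fully explicit, and it sidesteps the existence of the filling $S$, which the paper asserts without proof and which, for a merely normal (possibly non-compactly-supported) competitor $T'$, would itself need a word of justification via the cone construction or homotopy formula. The paper's formulation, in turn, is the one that survives the passage to generalized calibrations (Definition \ref{def:gen_calib}), where there is no form to antidifferentiate and the relevant condition is phrased precisely as vanishing on boundaries. Your closing caveat about where the argument would fail (top-dimensional currents, calibrations without decay) is accurate and consistent with the paper's footnote explaining why compact support of $\omega$ is harmless here.
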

\begin{proof}{\rm
Fix a competitor $T'$ which is a normal $E$-current associated with the vectorfield $\tau'$, the multiplicity $\theta'$ and the measure $\mu_{T'}$ (according to Definition \ref{def_norm_current}), with
$\partial T'=\partial T$. Since $\partial(T-T')=0$,
then $T-T'$ is a boundary of some $E$-current $S$ in $\R^d$, and then
\begin{eqnarray}
\mathds{M}(T)&=&\int_\Sigma \|\theta\|_{E}\,{\rm{d}}\Haus{k}\\\label{calib_line}
&\stackrel{\rm(i)}{=}&\int_\Sigma\langle\omega(x);\tau(x),\theta(x)\rangle\,{\rm{d}}\Haus{k}=\langle T;\omega\rangle\\\label{calib_line2}
&\stackrel{\rm(ii)}{=}&\langle T';\omega\rangle=\int_{\R^d}\langle\omega(x);\tau'(x),\theta'(x)\rangle\,{\rm{d}}\mu_{T'}\\\label{calib_line3}
&\stackrel{\rm(iii)}{\leq}&\int_{\R^d}\|\theta'\|_E\,{\rm{d}}\mu_{T'}=\mathds{M}(T')\ ,\label{calib_line4}
\end{eqnarray}
where each equality (respectively inequality) holds because of the corresponding property of $\omega$, as established in Definition \ref{def_calib}. In particular, equality in (ii) follows from
$$\langle T-T';\omega\rangle=\langle\partial S;\omega\rangle=\langle S;{\rm{d}}\omega\rangle=0.$$}
\end{proof}
\begin{remark}{\rm
If $T$ is a rectifiable $G$-current calibrated by $\omega$, then every mass minimizer with boundary $\partial T$ is calibrated by the same form $\omega$. In fact, choose a mass minimizer
$T'=T(\Sigma',\tau',\theta')$ with boundary $\partial T'=\partial T$: obviously we have $\mathds{M}(T)=\mathds{M}(T')$, then equality holds in \eqref{calib_line3}, which means
$$
\langle\omega(x);\tau'(x),\theta'(x)\rangle=\|\theta'(x)\|_E\quad{\rm{for}}\;\Haus{k}-{\rm a.e.}\;x\in\Sigma'\ .
$$
}
\end{remark}

At this point we need a short digression on the representation of a $E^*$-valued $1$-form $\omega$; we will consider the case $d=2$, all our examples being for the Steiner tree problem in $\R^2$. Remember that
in \S \ref{sec:steiner} we fixed a basis $(h_1,\ldots,h_{n-1})$ for $E^*$, dual to the basis $(g_1,\ldots,g_{n-1})$ for $E$. We represent
\begin{equation*}
\omega=\left(
\begin{array}{c}
\omega_{1,1}\,{\rm{d}}x_1+\omega_{1,2}\,{\rm{d}}x_2\\
\vdots\\
\omega_{n-1,1}\,{\rm{d}}x_1+\omega_{n-1,2}\,{\rm{d}}x_2
\end{array}
\right)\ ,
\end{equation*}
so that, if $\tau=\tau_1{\bf e}_1+\tau_2{\bf e}_2\in \Lambda_1(\R^2)$ and $v=v_1g_1+\ldots+v_{n-1}g_{n-1}\in E$, then
$$
\langle\omega;\tau,v\rangle=\sum_{i=1}^{n-1}v_i(\omega_{i,1}\tau_1+\omega_{i,2}\tau_2)\ .
$$

\begin{example}\label{ex:triangle}{\rm
Consider the vector space $E$ and the group $G$ defined in Lemma \ref{representation} with $n=3$; let
$$p_0=(0,0),p_1=(1/2,\sqrt{3}/2),p_2=(1/2,-\sqrt{3}/2),p_3=(-1,0)$$
(see Figure \ref{fig:triang_sol}). Consider the rectifiable $G$-current $T$ supported in the cone over $(p_1,p_2,p_3)$, with respect to $p_0$, with piecewise constant weights $g_1,g_2,g_3=:-(g_1+g_2)$
on $\overline{p_0p_1},\overline{p_0p_2},\overline{p_0p_3}$
respectively (see Figure \ref{fig:triang_sol} for the orientation). This current $T$ is a minimizer for the mass.
In fact, a constant $G$-calibration $\omega$ associated with $T$ is
$$\omega:=\left(\begin{array}{c}
\frac{1}{2}\, {\rm{d}}x_1 + \frac{\sqrt{3}}{2}\, {\rm{d}}x_2\\
\frac{1}{2}\, {\rm{d}}x_1 - \frac{\sqrt{3}}{2}\, {\rm{d}}x_2
\end{array}\right)\ .
$$
Condition (i) is easy to check and condition (ii) is trivially verified because $\omega$ is constant. To check condition (iii) we note that, for the vector $\tau=\cos\alpha\,{\bf e}_1+\sin\alpha\,{\bf e}_2$, we have
$$
\langle\omega;\tau,\cdot\rangle=\left(\begin{array}{rr}
\frac{1}{2}\cos\alpha+\frac{\sqrt{3}}{2}\sin\alpha\\
\frac{1}{2}\cos\alpha-\frac{\sqrt{3}}{2}\sin\alpha
\end{array}\right)\ .
$$
In order to compute the comass norm of $\omega$, we could use the characterization of the norm $\|\cdot\|_{E^*}$ given in Remark \ref{normestar}, but for $n=3$ computations are simpler. 
Since the unit ball of $E$ is convex, and its extreme points are the unit points of $G$, then it is sufficient to evaluate $\langle\omega;\tau,\cdot\rangle$ on $\pm g_1, \pm g_2, \pm (g_1+g_2)$. We have
\begin{align*}
& |\langle\omega;\tau,g_1\rangle|=|\langle\omega;\tau,-g_1\rangle|=\left|\sin\left(\alpha+\frac{\pi}{6}\right)\right|\leq 1\ ,\\
& |\langle\omega;\tau,g_2\rangle|=|\langle\omega;\tau,-g_2\rangle|=\left|\sin\left(\alpha+\frac{5}{6}\pi\right)\right|\leq 1\ ,\\
& |\langle\omega;\tau,g_1+g_2\rangle|=|\langle\omega;\tau,-(g_1+g_2)\rangle|=|\cos\alpha|\leq 1\ .
\end{align*}

\begin{figure}[htbp]
\begin{center}
\scalebox{1}{
\input{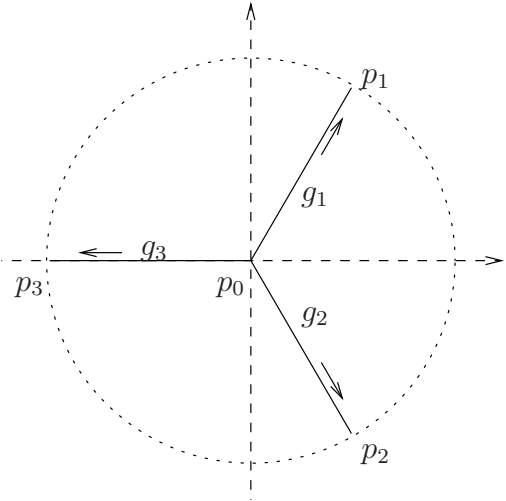}
}
\end{center}
\caption{Solution for the problem with boundary on the vertices of an equilateral triangle}
\label{fig:triang_sol}
\end{figure}

}
\end{example}

In Definition \ref{def_calib} we intentionally kept vague the regularity of the form $\omega$. Indeed $\omega$ has to be a compactly supported\footnote{Since we deal with currents that are compactly supported,
we can easily drop the assumption that $\omega$ has compact support.} smooth form, a priori, in order to fit Definition \ref{def_current}. Nevertheless, in some situations it will be useful to consider
calibrations with lower regularity, for instance piecewise constant forms. As long as \eqref{calib_line}-\eqref{calib_line4} remain valid, it is meaningful to do so; for this reason we introduce the following
very general definition.
\begin{definition}\label{def:gen_calib}{\rm
A \emph{generalized calibration}\index{generalized calibration} associated with a $k$-dimensional normal $E$-current $T$ is a linear and bounded functional $\phi$ on the space of normal $E$-currents satisfying
the following conditions:
\begin{enumerate}
\item[(i)] $\phi(T)=\mathds{M}(T)$;
\item[(ii)] $\phi(\partial R)=0$ for any $(k+1)$-dimensional normal $E$-current $R$;
\item[(iii)] $\|\phi\|\le 1$.
\end{enumerate}
}
\end{definition}
\begin{remark}{\rm
Proposition \ref{prop:calib} still holds, since for every competitor $T'$ with $\partial T=\partial T'$, there holds
$$
\mathds{M}(T)=\phi(T)=\phi(T')+\phi(\partial R)\le\mathds{M}(T')\ ,
$$
where $R$ is chosen such that $T-T'=\partial R$. Such $R$ exists because $T$ and $T'$ are in the same homology class.
}
\end{remark}


As examples, we present the calibrations for two well-known Steiner tree problems in $\R^2$. Both ``calibrations'' in Example \ref{ex:calib_square} and in Example \ref{ex:calib_hex_vittone} are piecewise
constant $1$-forms (with values in normed vector spaces of dimension $3$ and $6$, respectively). So firstly we need to show that certain piecewise constant forms provide generalized calibrations in the sense
of Definition \ref{def:gen_calib}.

\begin{definition}\label{def_compatib_calibration}{\rm
Fix a 1-dimensional rectifiable $G$-current $T$ in $\R^2$, $T=T(\Sigma,\tau,\theta)$. Assume we have a collection $\{C_r\}_{r\ge 1}$ which is a locally finite, Lipschitz partition of $\R^2$, where the sets $C_r$
have non empty connected interior, the boundary of every set $C_r$ is a Lipschitz curve (of finite length, unless $C_r$ is unbounded) and $C_{r}\cap C_{s}=\emptyset$ whenever $r\neq s$. Assume moreover that $C_1$
is a closed set and for every $r>1$
$$C_r\supset (\overline{C_r}\setminus\bigcup_{i<r} C_i).$$
Let us consider a compactly supported piecewise constant $E^*$-valued $1$-form $\omega$ with
$$
\omega\equiv\omega_r\;\;\; {\rm{on}}\;C_{r}\,
$$
where $\omega_r\in\Lambda^1_E(\R^2)$ for every $r$. In particular $\omega\neq 0$ only on finitely many elements of the partition.
Then we say that $\omega$ represents a \emph{compatible calibration}\index{compatible calibration} for $T$ if the following conditions hold:
\begin{enumerate}
\item[(i)] for $\Haus{1}$-almost every point $x\in \Sigma,\; \langle\omega(x);\tau(x),\theta(x)\rangle=\|\theta(x)\|_E$;
\item[(ii)] for $\Haus{1}$-almost every point $x\in\partial C_{r}\cap \partial C_{s}$ we have
$$
\langle\omega_{r}-\omega_{s};\tau(x),\cdot\rangle=0,
$$
where $\tau$ is tangent to $\partial C_r$;
\item[(iii)] $\|\omega_r\|\le 1$ for every $r$.
\end{enumerate}
We will refer to condition (ii) with the expression of \emph{compatibility condition}\index{compatibility condition} for a piecewise constant form.
}
\end{definition}

\begin{proposition}\label{prop_compatib_calibrat}
Let $\omega$ be a compatible calibration for the rectifiable $G$-current $T$. Then $T$ minimizes the mass among the normal $E$-currents with boundary $\partial T$.
\end{proposition}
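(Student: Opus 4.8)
The plan is to show, as anticipated in the discussion preceding the statement, that $\omega$ induces a generalized calibration in the sense of Definition \ref{def:gen_calib}, after which the minimality of $T$ follows exactly as in the remark there (equivalently, by reproducing the computation \eqref{calib_line}--\eqref{calib_line4} in the proof of Proposition \ref{prop:calib}). The obstruction is that $\omega$ is only piecewise constant, hence not an admissible test form for the boundary operator of Definition \ref{def_boundary}, so the crucial step $\langle\partial R;\omega\rangle=\langle R;{\rm d}\omega\rangle=0$ is not immediate. I would remove this obstruction by regularization: set $\omega_\varepsilon:=\omega\ast\rho_\varepsilon$ for a standard mollifier $\rho_\varepsilon$. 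Each $\omega_\varepsilon$ is smooth and, for $\varepsilon$ small, compactly supported, since $\omega$ is nonzero only on finitely many cells $C_r$; moreover $\|\omega_\varepsilon\|\le 1$, because the comass in \eqref{norm_kform} is a norm, hence convex, and $\omega_\varepsilon(x)$ is an average of values $\omega(y)$ each of comass $\le 1$ by condition (iii) of Definition \ref{def_compatib_calibration}.

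The heart of the matter is to show that the compatibility condition (ii) of Definition \ref{def_compatib_calibration} is exactly the assertion that $\omega$ is \emph{closed in the sense of distributions}, so that ${\rm d}\omega_\varepsilon=({\rm d}\omega)\ast\rho_\varepsilon=0$ and the $\omega_\varepsilon$ are genuine smooth, compactly supported, closed forms. I would argue component by component: writing $\omega^j$ for the classical $1$-forms of Remark \ref{rmk:components}, with $\omega^j\equiv\omega^j_r$ on $C_r$, closedness of $\omega^j$ means $\int_{\R^2}{\rm d}f\wedge\omega^j=0$ for every $f\in\mathscr{C}^\infty_c(\R^2)$. Since $\omega^j_r$ is constant, ${\rm d}f\wedge\omega^j_r={\rm d}(f\,\omega^j_r)$ on the interior of $C_r$, and Stokes' theorem on the Lipschitz set $C_r$ rewrites $\int_{C_r}{\rm d}f\wedge\omega^j$ as the boundary integral $\int_{\partial C_r}f\,\langle\omega^j_r;\tau\rangle\,{\rm d}\Haus{1}$. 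Summing over $r$, every interface $\partial C_r\cap\partial C_s$ is traversed twice with opposite orientations, so the contributions telescope to $\sum\int_{\partial C_r\cap\partial C_s}f\,\langle\omega^j_r-\omega^j_s;\tau\rangle\,{\rm d}\Haus{1}$, which vanishes by the compatibility condition. Hence ${\rm d}\omega^j=0$ distributionally for each $j$, i.e. ${\rm d}\omega=0$. I expect the orientation bookkeeping on the Lipschitz interfaces and the justification of Stokes' theorem on the sets $C_r$ to be the main technical obstacle here.

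With the $\omega_\varepsilon$ in hand the conclusion follows by mimicking \eqref{calib_line}--\eqref{calib_line4}. Fix a competitor normal $E$-current $T'$ with $\partial T'=\partial T$; since $T$ and $T'$ lie in the same homology class, $T-T'=\partial R$ for some normal $E$-current $R$, and by Definition \ref{def_boundary} together with ${\rm d}\omega_\varepsilon=0$ we get $\langle T-T';\omega_\varepsilon\rangle=\langle\partial R;\omega_\varepsilon\rangle=\langle R;{\rm d}\omega_\varepsilon\rangle=0$, so $\langle T;\omega_\varepsilon\rangle=\langle T';\omega_\varepsilon\rangle$ for every $\varepsilon$. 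On one side, $\langle T';\omega_\varepsilon\rangle\le\mathds{M}(T')$ because $\|\omega_\varepsilon\|\le1$. On the other side I claim $\langle T;\omega_\varepsilon\rangle\to\mathds{M}(T)$: writing $\langle T;\omega_\varepsilon\rangle=\int_\Sigma\langle\omega_\varepsilon;\tau,\theta\rangle\,{\rm d}\Haus{1}$, the integrands are dominated by $\|\theta\|_E\in L^1(\Haus{1}\trace\Sigma)$ and converge to $\|\theta\|_E$ for $\Haus{1}$-a.e. $x\in\Sigma$. Indeed, at points interior to some $C_r$ this is condition (i) of Definition \ref{def_compatib_calibration}, while at $\Haus{1}$-a.e. point of $\Sigma$ lying on an interface the tangent $\tau(x)$ is tangent to that interface (two rectifiable sets share approximate tangents $\Haus{1}$-a.e. on their intersection), so $\langle\omega_\varepsilon(x);\tau(x),\theta(x)\rangle$, being a convex combination of the equal tangential values $\langle\omega_r;\tau(x),\theta(x)\rangle=\langle\omega_s;\tau(x),\theta(x)\rangle$, again equals $\|\theta(x)\|_E$ by the compatibility condition and (i). Dominated convergence and Proposition \ref{prop_massmult} then give $\langle T;\omega_\varepsilon\rangle\to\int_\Sigma\|\theta\|_E\,{\rm d}\Haus{1}=\mathds{M}(T)$. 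Combining the two sides, $\mathds{M}(T)=\lim_\varepsilon\langle T';\omega_\varepsilon\rangle\le\mathds{M}(T')$, which is the asserted minimality; this is precisely the mechanism by which $\omega$ furnishes the generalized calibration $T'\mapsto\lim_\varepsilon\langle T';\omega_\varepsilon\rangle$ of Definition \ref{def:gen_calib}.
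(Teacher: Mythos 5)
Your argument is correct, but it follows a genuinely different route from the paper's. You mollify $\omega$ and show that the compatibility condition is precisely distributional closedness of the piecewise constant form (via Gauss--Green on each Lipschitz cell and telescoping of the interface terms), so that the smooth forms $\omega_\varepsilon=\omega\ast\rho_\varepsilon$ are closed with comass at most $1$; the conclusion then comes from passing to the limit in $\langle T;\omega_\varepsilon\rangle=\langle T';\omega_\varepsilon\rangle$ using dominated convergence and the fact that, $\Haus{1}$-a.e.\ on $\Sigma$, the limit of $\omega_\varepsilon(x)$ acts on $(\tau(x),\theta(x))$ as a convex combination of cell values all equal to $\|\theta(x)\|_E$. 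The paper instead never regularizes: it proves the integrated version of your closedness claim (the pairing of each component $\omega^j$ with integral $1$-cycles vanishes), uses it to build Lipschitz potentials $\phi_j$ with $\langle\omega^j;S\rangle=\phi_j(x_S)-\phi_j(y_S)$ for curves $S$, and then invokes the Smirnov-type decomposition of normal $1$-currents (Proposition \ref{normaldecomposition}) to reduce $\langle\omega;\partial R\rangle$ to an integral of such potential differences, which vanishes because $\partial(\partial R^j)=0$; this exhibits $\omega$ directly as a generalized calibration in the sense of Definition \ref{def:gen_calib}. Your approach buys independence from the decomposition theorem of Smirnov/Paolini--Stepanov and makes the link with smooth calibrations transparent, at the cost of the extra $\Haus{1}$-a.e.\ analysis on $\Sigma$ at interface points (shared approximate tangents, density considerations at multiple junctions) and of Stokes' theorem on the Lipschitz cells --- technicalities you correctly flag and which are of the same order as those the paper itself leaves implicit. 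Both arguments ultimately rely on the same cone-construction fact that $T-T'=\partial R$ for some normal $E$-current $R$, so neither has an advantage there.
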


To prove this proposition we need the following result of decomposition of classical normal $1$-currents, see \cite{smirnov} for the classical result and \cite{paolini} for its generalization to metric spaces.
Given a compact measure space $(X,\mu)$ and a family of $k$-currents $\{T_x\}_{x\in X}$ in $\R^d$, such that
$$
\int_X\mathds{M}(T_x)\,{\rm d}\mu(x)<+\infty\,,
$$
we denote by
$$
T:=\int_X T_x\,{\rm d}\mu(x)
$$
the $k$-current $T$ satisfying
$$
\langle T,\omega\rangle=\int_X\langle T_x,\omega\rangle\,{\rm d}\mu(x)\,,
$$
for every smooth compactly supported $k$-form $\omega$.

\begin{proposition}\label{normaldecomposition}
Every normal 1-current $T$ in $\R^d$ can be written as
$$
T=\int_0^M T_t\; {\rm d}t,
$$
where $T_t$ is an integral current with $\mathds{M}(T_t)\le 2$ and $\mathds{M}(\partial T_t)\le 2$ for every $t$, and $M$ is a positive number depending only on $\mathds{M}(T)$ and $\mathds{M}(\partial T)$. Moreover
$$\mathds{M}(T)=\int_0^M\mathds{M}(T_t)\;{\rm d}t\,.$$
\end{proposition}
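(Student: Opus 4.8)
The plan is to reduce the statement to the classical decomposition of a normal $1$-current into curves, due to Smirnov \cite{smirnov} (and Paolini--Stepanov \cite{paolini} for the metric version), and then to re-index the resulting superposition over curves by a bounded interval. Concretely, Smirnov's theorem produces a finite Borel measure $\eta$ on the space $\Pi$ of Lipschitz curves in $\R^d$ and, for $\eta$-a.e. $\gamma$, an associated integral $1$-current $R_\gamma$ of multiplicity one, so that $T=\int_\Pi R_\gamma\,\mathrm{d}\eta(\gamma)$ with no cancellation of mass, i.e. $\mathds{M}(T)=\int_\Pi \mathds{M}(R_\gamma)\,\mathrm{d}\eta(\gamma)=\int_\Pi \ell(\gamma)\,\mathrm{d}\eta(\gamma)$, where $\ell(\gamma)$ is the length of $\gamma$; moreover, on the acyclic part the boundary masses add as well, $\mathds{M}(\partial T)=\int_\Pi \mathds{M}(\partial R_\gamma)\,\mathrm{d}\eta(\gamma)$, and each acyclic $\gamma$ contributes $\mathds{M}(\partial R_\gamma)=2$.

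First I would make the pieces small. Subdividing each curve $\gamma$ into consecutive sub-arcs of length at most $2$ replaces $R_\gamma$ by a finite sum $R_\gamma=\sum_i R_{\sigma_i}$ of integral currents carried by arcs $\sigma_i$, each satisfying $\mathds{M}(R_{\sigma_i})\le 2$ and $\mathds{M}(\partial R_{\sigma_i})\le 2$. Since the subdivision is purely spatial, it creates no cancellation, so the refined measure $\eta'$ on the space $\Pi'$ of short arcs still represents $T$ and still satisfies $\mathds{M}(T)=\int_{\Pi'}\mathds{M}(R_\sigma)\,\mathrm{d}\eta'(\sigma)$. Every $R_\sigma$ is now an integral current with the two required mass bounds.

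Next I would flatten the measure. A finite Borel measure $\eta'$ of total mass $M:=\eta'(\Pi')$ on the Polish space $\Pi'$ is the push-forward of the restriction of $\Leb 1$ to $[0,M]$ under a Borel map $f\colon[0,M]\to\Pi'$ (the measure-isomorphism, or generalized quantile, construction). Setting $T_t:=R_{f(t)}$, the push-forward identity gives $\langle T;\omega\rangle=\int_0^M\langle T_t;\omega\rangle\,\mathrm{d}t$ for every smooth compactly supported $\omega$, that is $T=\int_0^M T_t\,\mathrm{d}t$, and likewise $\int_0^M\mathds{M}(T_t)\,\mathrm{d}t=\int_{\Pi'}\mathds{M}(R_\sigma)\,\mathrm{d}\eta'(\sigma)=\mathds{M}(T)$, while each $T_t$ is an integral current with $\mathds{M}(T_t)\le 2$ and $\mathds{M}(\partial T_t)\le 2$. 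Measurability of $t\mapsto T_t$ is immediate, since $\sigma\mapsto\langle R_\sigma;\omega\rangle$ is Borel.

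The point that requires care, and the main obstacle, is to show that $M=\eta'(\Pi')$ is bounded by a quantity depending only on $\mathds{M}(T)$ and $\mathds{M}(\partial T)$. The contribution of the long curves is harmless, since a curve of length $\ell$ produces at most $\ell/2+1$ arcs and $\int\ell\,\mathrm{d}\eta=\mathds{M}(T)$; the arcs coming from acyclic curves are also controlled, because $\eta(\{\text{acyclic}\})=\tfrac12\mathds{M}(\partial T)$ by the boundary additivity recalled above. The genuinely delicate case is that of the \emph{short cycles} in the solenoidal part, whose total $\eta$-measure is not a priori controlled by either mass. Here I would not keep each short cycle as a separate $T_t$, but rather group short cycles into integral $1$-cycles of mass in $[1,2]$ before flattening; since each group then carries mass at least $1$, their number is at most $\mathds{M}(T)+1$, which yields a bound of the form $M\le c\,(\mathds{M}(T)+\mathds{M}(\partial T)+1)$. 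Making this grouping measurable while keeping the grouped pieces honestly integral—finite, rather than diffuse, combinations of curves—is the technical heart of the argument.
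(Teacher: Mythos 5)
First, a point of reference: the paper does not prove Proposition \ref{normaldecomposition} at all; it is quoted as known, with \cite{smirnov} for the Euclidean case and \cite{paolini} for metric spaces. So you are really being asked to reconstruct Smirnov's argument, and your overall strategy (decompose into curves, cut into arcs of length at most $2$, re-parametrize the resulting measure by Lebesgue measure on an interval) is the right skeleton. The flattening step is standard, and your treatment of the acyclic part is correct: there both mass and boundary mass decompose additively, each simple curve of length $\ell$ yields at most $\ell/2+1$ arcs, and the total parameter length is bounded by $\tfrac12\mathds{M}(T)+\tfrac12\mathds{M}(\partial T)$.

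The genuine gap sits entirely in the cyclic part, and it is twofold. First, your starting point is not what Smirnov's theorem provides: the solenoidal component of a normal $1$-current does \emph{not} in general decompose into closed rectifiable curves of finite length (the normal cycle carried by an irrational linear flow on an embedded $2$-torus in $\R^3$ has no closed orbits), so the family of ``short cycles'' you propose to regroup need not exist. Smirnov decomposes cycles into \emph{elementary solenoids}, i.e.\ limits $\lim_{T\to\infty}\frac{1}{2T}R_{\gamma|_{[-T,T]}}$ along possibly infinite curves, which are not integral currents. Second, even granting your framing, the step you yourself call ``the technical heart'' --- regrouping a possibly diffuse measure carried by arbitrarily short cycles into honestly integral pieces of mass in $[1,2]$, measurably in $t$ --- is precisely where the bound on $M$ is won or lost, and it is not carried out. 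The two difficulties actually cancel if you work with the correct decomposition: an elementary solenoid along an infinite curve is itself the limit of the averages $\frac{1}{4T}\int_{-T}^{T}R_{\gamma|_{[s-1,s+1]}}\,{\rm d}s$ of integral arcs of length $2$, while a periodic one of period $\ell\le 2$ equals $\int_0^{1/(k\ell)}kR_\gamma\,{\rm d}t$ with $k=\lfloor 2/\ell\rfloor$; in both cases a unit-mass solenoid is written as $\int_0^{M_\theta}T_{t,\theta}\,{\rm d}t$ with $M_\theta\le 1$ and $T_{t,\theta}$ integral with the required bounds, so the cyclic part contributes at most $\mathds{M}(T)$ to $M$ and no ad hoc grouping is needed. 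Note finally that in the paper's only application of this proposition the current being decomposed is $\partial T^j$, hence a cycle: the case you handle completely is the one that is not used, and the case that is used is the one left open.
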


\begin{proof}[Proof of Proposition \ref{prop_compatib_calibrat}]
Firstly we see that a suitable counterpart of Stokes Theorem holds. Namely, given a component $\omega^j$ of $\omega$ and a classical integral 1-current $T=T(\Sigma,\tau,1)$ in $\R^2$, without boundary, then we claim that
\begin{equation}\label{stok}
 \langle\omega^j;T\rangle:=\int_{\Sigma}\langle\omega^j(x);\tau(x)\rangle {\rm{d}}\Haus{1}(x)=0.
\end{equation}

To prove such claim, note that it is possible to find at most countably many unit multiplicity integral 1-currents $T_i=T(\Sigma_i,\tau_i,1)$ in $\R^2$, without boundary, each one supported in a single set
$\overline{C_r}$, such that $\sum_i T_i=T$. Since $\omega^j\equiv\omega^j_r$ on $C_r$ and since (ii) holds, then
$$\int_{\Sigma_i}\langle\omega^j(x);\tau_i(x)\rangle {\rm{d}}\Haus{1}(x)=\int_{\Sigma_i}\langle\omega^j_r(x);\tau_i(x)\rangle {\rm{d}}\Haus{1}(x)=0$$
for every $i$, then the claim follows.

As a consequence of (\ref{stok}) we can find family of ``potentials'', i.e. Lipschitz functions $\phi_j:\R^2\rightarrow\R$ such that for every (classical) integral 1-current $S$ associated to
a Lipschitz path $\gamma$ with $\gamma(1)=x_S$ and $\gamma(0)=y_S$, there holds:
$$\langle\omega^j;S\rangle=\phi_j(x_S)-\phi_j(y_S), \;\;\;{\rm{for\;every}}\;j.$$
Indeed, by (\ref{stok}) the above integral does not depend on the path $\gamma$ but only on the points $x_S$ and $y_S$.
Therefore, in order to construct such potentials, it is sufficient to choose $\phi_j(0)=0$ and
$$\phi_j(x)=|x|\int_0^1\langle\omega^j(tx);\frac{x}{|x|}\rangle\;{\rm{d}}t.$$
Moreover it is easy to see that every $\phi_j$ is constant outside of the support of $\omega^j$, so we can assume, possibly subtracting a constant, that $\phi_j$ is compactly supported.

Now, consider any $2$-dimensional normal $E$-current $T$. Let $\{T^j\}_j$ be the components of $T$. For every $j$, use Proposition \ref{normaldecomposition} to write $S^j:=\partial T^j=\int_0^{M_j} S^j_t\,{\rm{d}}t$. Then we have
$$
\langle\omega;\partial T\rangle=\sum_j\int_0^{M_j}\langle \omega^j;S^j_t\rangle\;{\rm{d}}t=\sum_j\int_0^{M_j}\phi_j(x_{S^j_t})-\phi_j(y_{S^j_t})\;{\rm{d}}t.
$$
Since for every $j$ we have
$$
0=\partial(\partial T^j)=\int_0^{M_j}\delta_{x_{S^j_t}}-\delta_{y_{S^j_t}}\;{\rm{d}}t,
$$
then we must have
$$
\int_0^{M_j} g(x_{S^j_t})-g(y_{S^j_t})\;{\rm{d}}t=0,
$$
for every $j$ and for every compactly supported Lipschitz function $g$, in particular for $g=\phi_j$. Hence we have $\langle\omega;\partial T\rangle=0$.
\end{proof}

\begin{example}\label{ex:calib_square}{\rm
Consider the points
$$p_1=(1,1),p_2=(1,-1),p_3=(-1,-1),p_4=(-1,1)\in\R^2.$$
The corresponding solution of the Steiner tree problem\footnote{In dimension $d>2$, an interesting question related to this problem is the following: is the cone over the $(d-2)$-skeleton of the hypercube in $\R^d$
area minimizing, among hypersurfaces separating the faces? The question has a positive answer if and only if $d\ge 4$ (see \cite{brakke1} for the proof).
} are those represented in Figure \ref{fig:sol_steiner}. We associate with each point $p_j$ with $j=1,\ldots,4$ the coefficients $g_j\in G$, where $G$ is the group defined in Lemma \ref{representation} with $n=4$:
let us call
$$
B:=g_1\delta_{p_1}+g_2\delta_{p_2}+g_3\delta_{p_3}+g_4\delta_{p_4}\ .
$$
This $0$-dimensional current is our boundary. Intuitively our mass-minimizing candidates among $1$-dimensional rectifiable $G$-currents are those represented in Figure \ref{fig:sol_bigex}: these currents
$T_{\rm hor},T_{\rm ver}$ are supported in the sets drawn, respectively, with continuous and dashed lines in Figure \ref{fig:sol_bigex} and have piecewise constant coefficients intended to satisfy the boundary
condition $\partial T_{\rm hor}=B=\partial T_{\rm ver}$.

\begin{figure}[htbp]
\begin{center}
\scalebox{1}{
\input{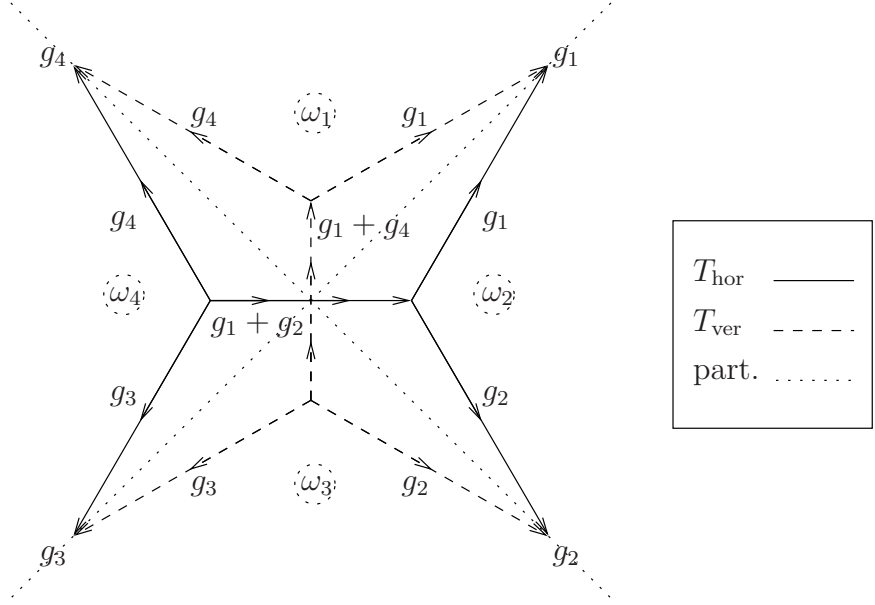}
}
\caption{Solution for the mass minimization problem}
\label{fig:sol_bigex}
\end{center}
\end{figure}

In this case, a compatible calibration for both $T_{\rm hor}$ and $T_{\rm ver}$ is defined piecewise as follows (the notation is the same as in Example \ref{ex:triangle} and the partition is delimited by the dotted
lines):
\begin{eqnarray*}
& \omega_1\equiv\left(
\begin{array}{rcr}
\frac{\sqrt{3}}{2} {\rm{d}}x_1   &\!\!\! + \!\!\!& \frac{1}{2} {\rm{d}}x_2\\
\left(1-\frac{\sqrt{3}}{2}\right) {\rm{d}}x_1 &\!\!\! - \!\!\!& \frac{1}{2} {\rm{d}}x_2\\
\left(-1+\frac{\sqrt{3}}{2}\right) {\rm{d}}x_1 &\!\!\! - \!\!\!& \frac{1}{2} {\rm{d}}x_2
\end{array}
\right)
\quad\ &
\omega_2\equiv\left(
\begin{array}{rcr}
 \frac{1}{2}{\rm{d}}x_1 &\!\!\! + \!\!\!&  \frac{\sqrt{3}}{2}{\rm{d}}x_2  \\
 \frac{1}{2}{\rm{d}}x_1 &\!\!\! - \!\!\!& \frac{\sqrt{3}}{2} {\rm{d}}x_2 \\
-\frac{1}{2}{\rm{d}}x_1 &\!\!\! - \!\!\!& \left(1-\frac{\sqrt{3}}{2}\right){\rm{d}}x_2
\end{array}
\right)\\
& \omega_3\equiv\left(
\begin{array}{rcr}
\left(1-\frac{\sqrt{3}}{2}\right){\rm{d}}x_1&\!\!\! + \!\!\!&  \frac{1}{2}{\rm{d}}x_2\\
\frac{\sqrt{3}}{2}{\rm{d}}x_1 &\!\!\! - \!\!\!   &\frac{1}{2}{\rm{d}}x_2\\
-\frac{\sqrt{3}}{2}{\rm{d}}x_1  &\!\!\! - \!\!\!& \frac{1}{2}{\rm{d}}x_2
\end{array}
\right)
\quad\ &
\omega_4\equiv\left(
\begin{array}{rcr}
 \frac{1}{2}{\rm{d}}x_1&\!\!\! + \!\!\! & \left(1-\frac{\sqrt{3}}{2}\right){\rm{d}}x_2\\
 \frac{1}{2}{\rm{d}}x_1&\!\!\! - \!\!\! & \left(1-\frac{\sqrt{3}}{2}\right){\rm{d}}x_2\\
-\frac{1}{2}{\rm{d}}x_1 &\!\!\! - \!\!\!& \frac{\sqrt{3}}{2}{\rm{d}}x_2
\end{array}
\right)
\end{eqnarray*}

\noindent It is easy to check that $\omega$ satisfies both condition (i) and the compatibility condition of Definition \ref{def_compatib_calibration}. To check that condition (iii) is satisfied, we can use formula
\eqref{char_norm_Estar}.
}
\end{example}

\begin{example}\label{ex:calib_hex_vittone}{\rm
Consider the vertices of a regular hexagon plus the center, namely
\begin{eqnarray*}
p_1=(1/2,\sqrt{3}/2),\   & p_2=(1,0),\  & p_3=(1/2,-\sqrt{3}/2),\\
p_4=(-1/2,-\sqrt{3}/2),\ & p_5=(-1,0),\ & p_6=(-1/2,\sqrt{3}/2),\quad p_7=(0,0)
\end{eqnarray*}
and associate with each point $p_j$ the corresponding multiplicity $g_j\in G$, where $G$ is the group defined in Lemma \ref{representation} with $n=4$. A mass-minimizer for the problem with boundary
$$
B=\sum_{j=1}^7 g_j\delta_{p_j}
$$
is illustrated in Figure \ref{fig:sol_hex}, the other one can be obtained with a $\pi/3$-rotation of the picture.

\begin{figure}[htbp]
\begin{center}
\scalebox{1}{
\input{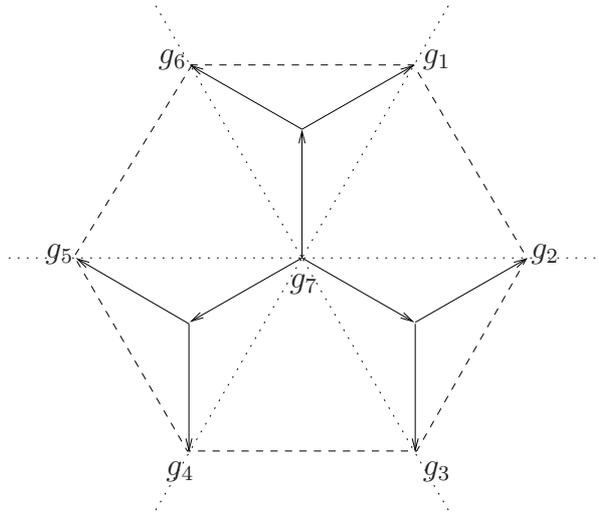}
}
\caption{Solution for the mass minimization problem}
\label{fig:sol_hex}
\end{center}
\end{figure}

Let us divide $\R^2$ in $6$ cones of angle $\pi/3$, as in Figure \ref{fig:sol_hex}; we will label each cone with a number from $1$ to $6$, starting from that containing $(0,1)$ and moving clockwise.
A compatible calibration for the two minimizers is the following

\begin{equation}\label{calib_hex_vittone}
\begin{array}{lll}
\omega_1=\left(\!\!\!
\begin{array}{rr}
-\frac{\sqrt{3}}{2}{\rm{d}}x_1 \!\!\!&\!\!\! + \frac{1}{2}{\rm{d}}x_2 \\
 \frac{\sqrt{3}}{2}{\rm{d}}x_1 \!\!\!&\!\!\! + \frac{1}{2}{\rm{d}}x_2 \\
                   &0\hphantom{\frac{1}{2}{\rm{d}}x_2}   \\
                   &0\hphantom{\frac{1}{2}{\rm{d}}x_2}   \\
                   &0 \hphantom{\frac{1}{2}{\rm{d}}x_2}             \\
                   &0 \hphantom{\frac{1}{2}{\rm{d}}x_2}             \\
\end{array}
\!\!\!\right)\ & \
\omega_2=\left(\!\!\!
\begin{array}{rr}
                  & 0\hphantom{\frac{1}{2}{\rm{d}}x_2}          \\
                  &  {\rm{d}}x_2\hphantom{\frac{1}{2}0}          \\
 \frac{\sqrt{3}}{2}{\rm{d}}x_1\!\!\! & \!\!\!-\frac{1}{2}{\rm{d}}x_2 \\
                  & 0\hphantom{\frac{1}{2}{\rm{d}}x_2}					 \\
                  & 0\hphantom{\frac{1}{2}{\rm{d}}x_2}           \\
                  & 0\hphantom{\frac{1}{2}{\rm{d}}x_2}           \\
\end{array}
\!\!\!\right)\ &\
\omega_3=\left(\!\!\!
\begin{array}{rr}
                 & 0\hphantom{\frac{1}{2}{\rm{d}}x_2} \\
									&	0\hphantom{\frac{1}{2}{\rm{d}}x_2} \\
 \frac{\sqrt{3}}{2}{\rm{d}}x_1\!\!\! &\!\!\!+\frac{1}{2}{\rm{d}}x_2 \\
                  & \!\!\!-{\rm{d}}x_2\hphantom{\frac{1}{2}} \\
                  & 0\hphantom{\frac{1}{2}{\rm{d}}x_2} \\
                  & 0\hphantom{\frac{1}{2}{\rm{d}}x_2} \\
\end{array}
\!\!\!\right)\\
\omega_4=\left(\!\!\!
\begin{array}{rr}
                  & 0\hphantom{\frac{1}{2}{\rm{d}}x_2}           \\
                  & 0\hphantom{\frac{1}{2}{\rm{d}}x_2}           \\
                  & 0\hphantom{\frac{1}{2}{\rm{d}}x_2}           \\
 \frac{\sqrt{3}}{2}{\rm{d}}x_1\!\!\! & \!\!\!-\frac{1}{2}{\rm{d}}x_2 \\
-\frac{\sqrt{3}}{2}{\rm{d}}x_1\!\!\! & \!\!\!-\frac{1}{2}{\rm{d}}x_2 \\
                  & 0\hphantom{\frac{1}{2}{\rm{d}}x_2}           \\
\end{array}
\!\!\!\right)\ & \
\omega_5=\left(\!\!\!
\begin{array}{rr}
                  & 0\hphantom{\frac{1}{2}{\rm{d}}x_2}           \\
                  & 0\hphantom{\frac{1}{2}{\rm{d}}x_2}           \\
                  & 0\hphantom{\frac{1}{2}{\rm{d}}x_2}           \\
                  & 0\hphantom{\frac{1}{2}{\rm{d}}x_2}           \\
                  & \!\!\!-{\rm{d}}x_2\hphantom{\frac{1}{2}}          \\
-\frac{\sqrt{3}}{2}{\rm{d}}x_1\!\!\! &\!\!\! +\frac{1}{2}{\rm{d}}x_2 \\
\end{array}
\!\!\!\right)\ & \
\omega_6=\left(\!\!\!
\begin{array}{rr}
                 &  {\rm{d}}x_2\hphantom{\frac{1}{2}0}          \\
                  &  0\hphantom{\frac{1}{2}{\rm{d}}x_2}           \\
                  &  0\hphantom{\frac{1}{2}{\rm{d}}x_2}           \\
                  &  0\hphantom{\frac{1}{2}{\rm{d}}x_2}           \\
                  &  0\hphantom{\frac{1}{2}{\rm{d}}x_2}           \\
-\frac{\sqrt{3}}{2}{\rm{d}}x_1\!\!\! & \!\!\!-\frac{1}{2}{\rm{d}}x_2 \\
\end{array}
\!\!\!\right)
\end{array}
\end{equation}

Again, it is not difficult to check that $\omega$ satisfies both condition (i) and the compatibility condition of Definition \ref{def_compatib_calibration}. To check that condition (iii) is satisfied, we use
formula \eqref{char_norm_Estar}.
}
\end{example}

\begin{remark}{\rm
We may wonder whether or not the calibration given in Example \ref{ex:calib_hex_vittone} can be adjusted so to work for the set of the vertices of the hexagon (without the seventh point in the center): the answer
is negative, in fact the support of the current in Figure \ref{fig:sol_hex} is not a solution for the Steiner tree problem on the six points, the perimeter of the hexagon minus one side being the shortest graph,
as proved in \cite{jarnik}.
}
\end{remark}

\begin{remark}{\rm
In both Examples \ref{ex:calib_square} and \ref{ex:calib_hex_vittone}, once we fixed the partition and we decided to look for a piecewise constant calibration for our candidates, the construction of $\omega$ was
forced by both conditions (i) of Definition \ref{def_calib} and the compatibility condition of Definition \ref{def_compatib_calibration}. Notice that the calibration for the Example \ref{ex:calib_hex_vittone}
has evident analogies with the one exhibited in the Example \ref{ex:triangle}. Actually we obtained the first one simply pasting suitably ``rotated'' copies of the second one.
}
\end{remark}

In the following remarks we intend to underline the analogies and the connections with calibrations in similar contexts. See Chapter 6 of \cite{morgan2} for an overview on the subject of calibrations.

\begin{remark}[Functionals defined on partitions and null lagrangians]\label{rmk:null_lagrangian}{\rm
There is an interesting and deep analogy between calibrations and null lagrangians, analogy that still holds in the group-valued coefficients framework. Consider some points $\{\eta_1,\ldots,\eta_n\}\subset\R^m$,
with
\begin{equation}\label{rmk:nl_cond_coeff}
|\eta_i-\eta_j|=1\quad\forall\,i\neq j\ ,
\end{equation}
and fix an open set with Lipschitz boundary $\Omega\subset\R^d$.
It is natural to study the variational problem
\begin{equation}\label{rmk:var_pb_nl}
\inf\left\{\int_\Omega|Du|:\,u\in BV\left(\Omega;\{\eta_1,\ldots,\eta_n\}\right),u_{|\partial\Omega}\equiv u_0\right\}\ .
\end{equation}
It turns out that $\int_\Omega|Du|$ is the same energy we want to minimize in the Steiner tree problem, $\int_\Omega|Du|$ being the length of the jump set of $u$.

\begin{figure}[htbp]
\begin{center}
\scalebox{1}{
\input{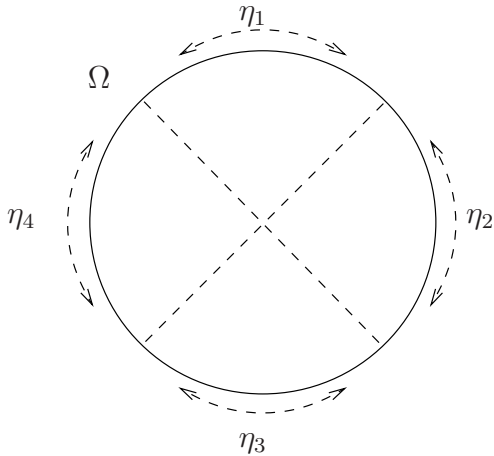}
}
\end{center}
\caption{Boundary data}
\label{fig:nl_boundary}
\end{figure}

This problem concerns the theory of partitions of an open set $\Omega$ in a finite number of sets of finite perimeter. This theory was developed by Ambrosio and Braides in \cite{Am_Bra1,Am_Bra2}, which we
refer to for a complete exposition.

The analog of a calibration in this context is a null lagrangian\footnote{See \cite{Da} for an overview on null lagragians.} with some special properties: again, the existence of such an object, associated with a function $u$, is a sufficient condition for $u$ to be a minimizer for the variational problem \eqref{rmk:var_pb_nl} with a given boundary condition.

We refer to $3.2.4$ in \cite{massaccesi} for a detailed survey of the analogy.
}
\end{remark}


\begin{remark}[Clusters with multiplicities]\label{rmk:morgan_calib}{\rm
In \cite{morgan}, F. Morgan applies flat chains with coefficients in a group $G$ to soap bubble clusters and immiscible fluids, following the idea of B. White in \cite{white0}. For a detailed comparison of \cite{morgan} with our technique, see $3.2.3$ of \cite{massaccesi}. Here we just notice that the definition of calibration in \cite{morgan} works well in the case of free abelian groups and this is the main difference with our approach.
}
\end{remark}

\begin{remark}[Paired calibrations]\label{rmk:paired_calib}{\rm
It is worth mentioning another analogy between the technique of calibrations (for currents with coefficients in a group) illustrated in this paper and the technique of paired calibrations in \cite{Law_Mor}. In particular, in the specific example of the truncated cone over the $1$-skeleton of the tetrahedron in $\R^3$ (the surface with least area among
those separating the faces of the tetrahedron), one can detect a correspondence even at the level of the main computations. See $3.2.3$ of \cite{massaccesi} for the details.
}
\end{remark}


Following an idea of Federer (see \cite{federer}), in \cite{morgan} and \cite{Law_Mor} (and in \cite{brakke1} and \cite{brakke2}, as well) one can observe the exploitation of the duality between minimal
surfaces and maximal flows through the same boundary. We will examine this duality in \S \ref{sec:open_pb}, but we conclude the present section with a remark closely related to this idea.

\begin{remark}[Covering spaces and calibrations for soap films]{\rm
In \cite{brakke2} Brakke develops new tools in Geometric Measure Theory for the analysis of soap films: as the underlying physical problem suggests, one can represent a soap film as the superposition of two
oppositely oriented currents. In order to avoid cancellations of multiplicities, the currents are defined in a covering space and, as stated in \cite{brakke2}, the calibration technique still holds.

Let us remark that cancellations between multiplicities were a significant obstacle for the Steiner tree problem, too. The representation of currents in a covering space goes in the same direction of currents
with coefficients in a group, though, as in Remark \ref{rmk:paired_calib}, a sort of Poincar\'e duality occurs in the formulation of the Steiner tree problem ($1$-dimensional currents in $\R^d$) with respect
to the soap film problem (currents of codimension $1$ in $\R^d$).
}
\end{remark}


\section{Existence of the calibration and open problems}\label{sec:open_pb}

Once we established that the existence of a calibration is a sufficient condition for a rectifiable $G$-current to be a mass-minimizer, we may wonder if the converse is also true: does a calibration (of some sort)
exist for every mass-minimizing rectifiable $G$-current?

Let us step backward: does it occur for classical integral currents? The answer is quite articulate, but we can briefly summarize the state of the art we will rely upon. 

We consider a boundary $B_0$, that is, a $(k-1)$-dimensional rectifiable $G$-current without boundary, and we compare the following minima:
$$
{\mathscr M}_E(B_0):=\min\{\mathds{M}(T):\,T\ {\rm is\;a\;normal}\;k-{\rm dimensional}\;E{\rm -current},\,\partial T=B_0\}
$$
and
$$
{\mathscr {M}}_G(B_0):=\min\{\mathds{M}(T):\,T\ {\rm is\;a\;rectifiable}\;k-{\rm dimensional}\;G{\rm -current},\,\partial T=B_0\}.
$$
Obviously ${\mathscr{M}}_E(B_0)\le{\mathscr{M}}_G(B_0)$, the main issue is to establish whether they coincide or not. In fact, a normal $E$-current $T$ with boundary $B_0$ admits a generalized calibration if and only if $\mathds{M}(T)={\mathscr M}_E(B_0)$, as we recall in Proposition \ref{E-fed}. In the classical case ($E=\R$ and $G=\Z$) it is known that
\begin{enumerate}
\item[(i)] ${\mathscr{M}}_\R(B_0)$ may be strictly less than ${\mathscr {M}}_\Z(B_0)$ (and, if this happens, a solution for  ${\mathscr{M}}_\Z(B_0)$ cannot be calibrated);\item[(ii)]
${\mathscr {M}}_\Z(B_0)={\mathscr {M}}_\R(B_0)$ if $k=1$, as we prove in Proposition \ref{thm_minmin_classic}.
\end{enumerate}

At the end of this section, we show that this outlook changes significantly when we replace the ambient space $\R^d$ with a suitable metric space.

\begin{remark}\label{rmk_fed_arg}{\rm
For every mass-minimizing classical normal $k$-current $T$, there exists a generalized calibration $\phi$ in the sense of Definition \ref{def:gen_calib}. Moreover, by means of the Riesz Representation Theorem,
$\phi$ can be represented by a measurable map from $U$ to $\Lambda^k(\R^d)$.
This result is contained in \cite{federer}.
}
\end{remark}

In particular, Remark \ref{rmk_fed_arg} provides a positive answer to the question of the existence of a generalized calibration for mass-minimizing integral currents of dimension $k=1$, because minima
among both normal and integral currents coincide, as we prove in Proposition \ref{thm_minmin_classic}. It is possible to apply the same technique in the class of normal $E$-currents, therefore we have the
following proposition.

\begin{proposition}\label{E-fed}
For every mass minimizing normal $E$-current $T$, there exists a ge\-ne\-ra\-li\-zed calibration.
\end{proposition}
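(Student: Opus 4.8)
The plan is to reproduce Federer's duality argument in the abstract setting of normal $E$-currents, where the notion of generalized calibration (Definition \ref{def:gen_calib}) has been tailored precisely so that its existence becomes a direct application of the Hahn--Banach theorem. First I would fix the dimension $k$ and regard the collection $\mathcal{N}$ of all $k$-dimensional normal $E$-currents as a real vector space on which the mass $\mathds{M}$ is a sublinear functional: positive homogeneity $\mathds{M}(\lambda S)=\lambda\mathds{M}(S)$ for $\lambda\ge 0$ and subadditivity $\mathds{M}(S_1+S_2)\le\mathds{M}(S_1)+\mathds{M}(S_2)$ both follow at once from the definition $\mathds{M}(S)=\sup_{\|\omega\|\le 1}\langle S;\omega\rangle$ as a supremum of linear functionals. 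Let $\mathcal{B}:=\{\partial R:\,R\text{ a normal }(k+1)\text{-dimensional }E\text{-current}\}\subset\mathcal{N}$ be the linear subspace of boundaries; each $b\in\mathcal{B}$ satisfies $\partial b=0$, so $T+b$ is an admissible competitor, and mass-minimality of $T$ rewrites as
$$
\mathds{M}(T)\le\mathds{M}(T+b)\qquad\text{for every }b\in\mathcal{B}.
$$

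Next I would define the candidate calibration on the subspace $V:=\R T+\mathcal{B}$ by $\phi_0(tT+b):=t\,\mathds{M}(T)$. If $T\in\mathcal{B}$, the displayed inequality with $b=-T$ gives $\mathds{M}(T)\le\mathds{M}(0)=0$, hence $T=0$ and the zero functional already works; so I may assume $T\notin\mathcal{B}$, which makes the decomposition $tT+b$ unique and $\phi_0$ well-defined and linear. The crucial point is the domination $\phi_0\le\mathds{M}$ on $V$, verified by cases: for $t>0$, positive homogeneity and minimality give $\phi_0(tT+b)=t\mathds{M}(T)\le t\,\mathds{M}(T+b/t)=\mathds{M}(tT+b)$; for $t=0$, $\phi_0(b)=0\le\mathds{M}(b)$; and for $t<0$, $\phi_0(tT+b)=t\mathds{M}(T)\le 0\le\mathds{M}(tT+b)$.

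I would then invoke the Hahn--Banach theorem to extend $\phi_0$ to a linear functional $\phi$ on all of $\mathcal{N}$ with $\phi\le\mathds{M}$. The three defining properties are immediate: $\phi(T)=\phi_0(T)=\mathds{M}(T)$ yields (i); the extension agrees with $\phi_0\equiv 0$ on $\mathcal{B}$, so $\phi(\partial R)=0$ yields (ii); and applying $\phi\le\mathds{M}$ to both $S$ and $-S$ gives $|\phi(S)|\le\mathds{M}(S)$, i.e.\ $\|\phi\|\le 1$, which is (iii). In particular $\phi$ is bounded for the mass norm, so it is a legitimate generalized calibration in the sense of Definition \ref{def:gen_calib}.

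The routine verifications---sublinearity of $\mathds{M}$ and well-definedness of $\phi_0$---are the only places requiring mild care, and I expect the essential observation to be simply that none of these steps use anything about the coefficient structure beyond the fact that $(\mathcal{N},\mathds{M})$ is a sublinear normed vector space; this is exactly why Federer's classical argument (Remark \ref{rmk_fed_arg}) transfers verbatim from $\R$-coefficients to $E$-coefficients. The finer assertion that such a $\phi$ can be represented by a bounded measurable $\Lambda^k$-valued map would demand an additional Riesz-type representation step, but it is not needed for the existence claimed here.
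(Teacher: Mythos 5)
Your proof is correct and follows essentially the same route as the paper, which justifies Proposition \ref{E-fed} only by observing that Federer's duality argument (Remark \ref{rmk_fed_arg}, \cite{federer}) transfers to normal $E$-currents: your Hahn--Banach extension of $\phi_0(tT+b)=t\,\mathds{M}(T)$ from $\R T+\mathcal{B}$, dominated by the sublinear functional $\mathds{M}$, is precisely that argument. You are also right that the measurable-form representation is a separate Riesz-type step not required by the statement.
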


The following fact is probably in the folklore, unfortunately we were not able to find any literature on it. We give a proof here in order to enlighten the problems arising in the case of currents with coefficients in a group.

\begin{proposition}\label{thm_minmin_classic}
Consider the boundary of an integral $1$-current in $\R^d$, represented as
\begin{equation}\label{eq_boundary_minmin}
B_0=-\sum_{i=1}^{N_-} a_i\delta_{x_i}+\sum_{j=1}^{N_+} b_j\delta_{y_j},\quad a_i,b_j\in\N\ .
\end{equation}
Then ${\mathscr M}_\R(B_0)={\mathscr M}_\Z(B_0).$
\end{proposition}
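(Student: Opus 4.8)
The plan is to identify both ${\mathscr M}_\R(B_0)$ and ${\mathscr M}_\Z(B_0)$ with the value of a finite transportation problem and then exploit its integrality. Reading off the sources $x_i$ (with masses $a_i$) and sinks $y_j$ (with masses $b_j$) from \eqref{eq_boundary_minmin}, I set
$$
c:=\min\Big\{\sum_{i,j}\pi_{ij}\,|x_i-y_j|\ :\ \pi_{ij}\ge 0,\ \textstyle\sum_j\pi_{ij}=a_i,\ \sum_i\pi_{ij}=b_j\Big\}\,.
$$
The constraints are consistent because $\sum_i a_i=\sum_j b_j$, which holds since $B_0$ is a boundary. I will prove $c\le{\mathscr M}_\R(B_0)$ and ${\mathscr M}_\Z(B_0)\le c$; combined with the trivial inequality ${\mathscr M}_\R(B_0)\le{\mathscr M}_\Z(B_0)$, this forces all three quantities to coincide.

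First I would establish ${\mathscr M}_\Z(B_0)\le c$. The feasible set of the linear program defining $c$ is a transportation polytope, whose constraint matrix is totally unimodular; since the marginals $a_i,b_j$ are integers, every vertex of the polytope is integral, and the minimum of a linear functional is attained at a vertex. Hence there is an optimal plan $\pi^\ast_{ij}\in\N$. To it I associate the integral $1$-current $T'=\sum_{i,j}\pi^\ast_{ij}\,S_{ij}$, where $S_{ij}$ denotes the oriented segment $\overline{x_iy_j}$ with unit multiplicity, so that $\partial S_{ij}=\delta_{y_j}-\delta_{x_i}$. Then $\partial T'=\sum_j b_j\delta_{y_j}-\sum_i a_i\delta_{x_i}=B_0$, and by subadditivity of the mass $\mathds{M}(T')\le\sum_{i,j}\pi^\ast_{ij}|x_i-y_j|=c$. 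Therefore ${\mathscr M}_\Z(B_0)\le\mathds{M}(T')\le c$.

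Next I would prove $c\le{\mathscr M}_\R(B_0)$ by Kantorovich duality. The dual of the transportation problem produces numbers attached to the $x_i,y_j$ with increments bounded by the corresponding distances; by McShane's extension I promote them to a globally $1$-Lipschitz $u:\R^d\to\R$ realizing $c=\sum_j b_j\,u(y_j)-\sum_i a_i\,u(x_i)=\langle B_0;u\rangle$. Given any classical (real) normal $1$-current $T$ with $\partial T=B_0$, I mollify $u$ to a smooth $u_\varepsilon$ with $\|du_\varepsilon\|_\infty\le 1$ and $u_\varepsilon\to u$ locally uniformly; then
$$
\langle T; du_\varepsilon\rangle=\langle\partial T;u_\varepsilon\rangle=\langle B_0;u_\varepsilon\rangle\xrightarrow[\varepsilon\to0]{}\langle B_0;u\rangle=c\,,
$$
while $\langle T; du_\varepsilon\rangle\le\mathds{M}(T)\,\|du_\varepsilon\|\le\mathds{M}(T)$ because the comass of $du_\varepsilon$ is at most $1$. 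Passing to the limit gives $c\le\mathds{M}(T)$, and taking the infimum over competitors yields $c\le{\mathscr M}_\R(B_0)$.

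Combining the two inequalities with ${\mathscr M}_\R(B_0)\le{\mathscr M}_\Z(B_0)$ closes the chain $c\le{\mathscr M}_\R(B_0)\le{\mathscr M}_\Z(B_0)\le c$, so all are equal. The crux of the argument is the integrality step: it is precisely total unimodularity of the transportation polytope (equivalently, the integrality of minimal-cost flows with integer marginals) that forces the integer optimum to match the real one, and this is exactly where the hypothesis $a_i,b_j\in\N$ is used — for non-integer masses the integral minimum could strictly exceed the real one. A secondary technical point is the legitimacy of the pairing $\langle T;du\rangle=\langle\partial T;u\rangle$ for a merely Lipschitz potential $u$, which the mollification above handles; alternatively the lower bound could be extracted from the decomposition of normal $1$-currents in Proposition \ref{normaldecomposition}, but the duality route is shorter and avoids analyzing the fine structure of real minimizers.
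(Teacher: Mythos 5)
Your proof is correct, but it follows a genuinely different route from the paper's. The paper starts from a mass-minimizing normal current $T_0$, approximates it by polyhedral currents with multiplicities in $\frac{1}{h}\Z$, splits off the cyclic part via the decomposition theorem for integral $1$-currents (Theorem \ref{thm:struct_1currents}), replaces each acyclic piece by the straight segment from $x_i$ to $y_j$, passes to the limit to obtain a current $Q=\sum_{i,j}k^{ij}\Sigma^{ij}$ with real coefficients and $\mathds{M}(Q)\le{\mathscr M}_\R(B_0)$, and finally rounds the $k^{ij}$ to integers by cancelling along cycles without increasing mass. You instead sandwich both minima between the value $c$ of the finite transportation LP: total unimodularity gives ${\mathscr M}_\Z(B_0)\le c$, and Kantorovich--Rubinstein duality (an explicit $1$-Lipschitz potential $u$ with $\langle B_0;u\rangle=c$, paired with any normal competitor) gives $c\le{\mathscr M}_\R(B_0)$. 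The combinatorial cores coincide --- the paper's cycle-cancellation is precisely a hands-on proof that the transportation polytope has integral vertices --- but your lower bound replaces the polyhedral approximation and limit argument by a duality step, which is shorter and has the bonus of exhibiting the generalized calibration ${\rm d}u$ explicitly (making concrete what the paper only invokes abstractly via Federer's duality in Remark \ref{rmk_fed_arg}). What the paper's longer route buys is transparency about exactly which step (the reduction to segments and the integrality/homogeneity of the resulting finite problem) breaks down for $G$-currents, which is the point of Remark \ref{rmk:homog_cond} and Example \ref{ex_casetta_piccola}. Two small points to tighten: the optimal dual potentials satisfy $v_j-u_i\le|x_i-y_j|$ only for cross pairs, so rather than McShane you should take the infimal convolution $u(z):=\min_i\bigl(u_i+|x_i-z|\bigr)$ to get a globally $1$-Lipschitz potential that does not decrease the dual objective; and to pair $T$ with ${\rm d}u_\varepsilon$ you need a compactly supported test form, which you can arrange by a cutoff or by first retracting $T$ onto a large ball containing all the $x_i,y_j$ (a $1$-Lipschitz operation preserving the boundary and not increasing mass).
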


\begin{Proof}
Let us assume that the minimum among normal currents is attained at some current $T_0$, that is
$$
\mathds{M}(T_0)={\mathscr M}_\R(B_0)\ .
$$

Let $\{T_h\}_{h\in\N}$ be an approximation of $T_0$ made by polyhedral $1$-currents, such that
\begin{itemize}
\item $\mathds{M}(T_h)\rightarrow \mathds{M}(T_0)$ as $h\to\infty$,
\item $\partial T_h=B_0$ for all $h\in\N$,
\item the multiplicities allowed in $T_h$ are only integer multiples of $\frac{1}{h}$.
\end{itemize}

The existence of such a sequence is a consequence of the Polyhedral Approximation Theorem (see Theorem 4.2.24 of \cite{Fed} or \cite{krantz} for the detailed statement and the proof). Thanks
to Theorem \ref{thm:struct_1currents}, it is possible to decompose such a $T_h$ as a sum of two addenda:
\begin{equation}\label{dec_seq}
T_h=P_h+C_h\ ,
\end{equation}
so that
$$
\mathds{M}(T_h)=\mathds{M}(P_h)+\mathds{M}(C_h)\quad\forall\,h\ge 1
$$
and
\begin{itemize}
\item $\partial C_h=0$, so $C_h$ collects the cyclical part of $T_h$;
\item $P_h$ does not admit any decomposition $P_h=A+B$ satisfying $\partial A=0$ and $\mathds{M}(P_h)=\mathds{M}(A)+\mathds{M}(B)$
\end{itemize}

It is clear that $P_h$ is the sum of a certain number of polyhedral currents $P_h^{i,j}$ each one having boundary a non-negative multiple of $-\frac{1}{h}\delta_{x_i}+\frac{1}{h}\delta_{y_j}$ and satisfying
$$\mathds{M}(P_h)=\sum_{i,j}\mathds{M}(P_h^{i,j})$$
We replace each $P_h^{i,j}$ with the oriented segment $Q^{i,j}$, from $x_i$ to $y_j$
having the same boundary as $P_h^{i,j}$ (therefore having multiplicity a non-negative multiple of $\frac{1}{h}$). This replacement is represented in Figure \ref{fig:poly}
\begin{figure}[htbp]
\begin{center}
\scalebox{1}{
\input{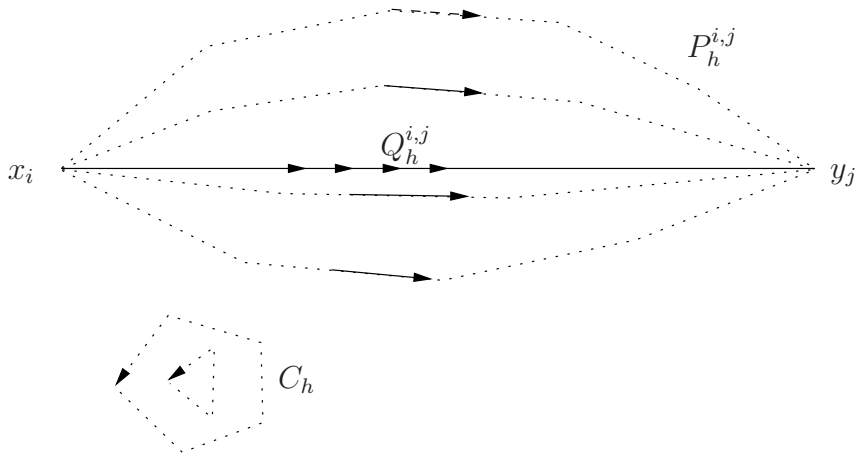}
}
\end{center}
\caption{Replacement with a segment}
\label{fig:poly}
\end{figure}

Since this replacement obviously does not increase the mass, there holds $\mathds{M}(P_h)\geq\mathds{M}(Q_h),$ where $Q_h=\sum_{i,j}Q_h^{i,j}$. In other words we can write
$Q_h=\int_{I}T\,{\rm{d}}\lambda_h,$
as an integral of currents, with respect to a discrete measure $\lambda_h$ supported on the finite set $I$ of unit multiplicity oriented segments with the first extreme among the points $x_1,\ldots,x_{N_-}$
and second extreme among the points $y_1,\ldots,y_{N_+}$. It is also easy to see that the total variation of $\lambda_h$ has eventually the following bound from above
$$
\|\lambda_h\|\le\frac{\mathds{M}(T_h)}{\min_{i\neq j} d(x_i,y_j)}\le\frac{\mathds{M}(T_0)+1}{\min_{i\neq j} d(x_i,y_j)}\ .
$$

\noindent Hence, up to subsequences, $\lambda_h$ converges to some positive measure $\lambda$ on $I$ and so the normal $1$-current
$$
Q=\int_{T\in I}T\,{\rm{d}}\lambda
$$
satisfies
\begin{equation}\label{eq_q_bound}
\partial Q =B_0
\end{equation}
and
$$
\mathds{M}(Q)\le \mathds{M}(T_0)={\mathscr M}_N(B_0)\ .
$$

\noindent In order to conclude the proof of the theorem, we need to show that $Q$ can be replaced by an integral current $R$ with same boundary and mass $\mathds{M}(R)=\mathds{M}(Q)\le{\mathscr M}_N(B_0)$.
Since $I$ is the set of unit multiplicity oriented segments $\Sigma^{ij}$ from $x_i$ to $y_j$, we can obviously represent
$$
Q=\sum_{i,j} k^{ij}\Sigma^{ij}\quad{\rm with}\ k^{ij}\in\R\ ,
$$
and, again, thanks to \eqref{eq_q_bound},
$$
\sum_{i=1}^{N_-}k^{ij}=b_j\quad{\rm and}\quad\sum_{j=1}^{N_+}k^{ij}=a_i\ .
$$

\noindent If $k^{ij}\in\Z$ for any $i,j$, then $Q$ itself is integral and then we are done; if not, let us consider the finite set of non-integer multiplicities
$$
K_{\R\setminus\Z}:=\left\{k^{ij}\,:\,i=1,\ldots,N_-,\,j=1,\ldots,N_+\right\}\setminus\Z\neq\emptyset\ .
$$

\noindent We fix $k\in K_{\R\setminus\Z}$ and we choose an index $(i_0,j_0)$, such that $k$ is the multiplicity of the oriented segment $\Sigma^{i_0 j_0}$ in $Q$. It is possible to track down a non-trivial
cycle $\overline Q$ in $Q$ with the following algorithm: after $\Sigma^{i_0 j_0}$, choose a segment from $x_{i_1}\neq x_{i_0}$ to $y_{j_0}$ with non-integer multiplicity, it must exist because $B_0=\partial Q$
is integral. Then choose a segment from $x_{i_1}$ to $y_{j_1}\neq y_{j_0}$ with non-integer multiplicity and so on. Since $K_{\R\setminus\Z}$ is finite, at some moment we will get a cycle. Up to reordering the
indices $i$ and $j$ we can write
$$
\overline Q=\sum_{l=1}^n(\Sigma^{i_l j_l}-\Sigma^{i_{l+1} j_l})\ .
$$

\noindent We will denote by
\begin{eqnarray*}
\alpha &:=& \min_l (k^{i_l j_l}-\lfloor k^{i_l j_l}\rfloor)>0\\
\beta  &:=& \min_l (k^{i_{l+1} j_l}-\lfloor k^{i_{l+1} j_l}\rfloor)>0\ .
\end{eqnarray*}

\noindent Finally notice that both $Q-\alpha\overline Q$ and $Q+\beta\overline Q$ have lost at least one non-integer coefficient; in addition, we claim that either
\begin{equation}\label{concl_minmin}
\mathds{M}(Q-\alpha\overline Q)\le\mathds{M}(Q)\quad{\rm or}\quad\mathds{M}(Q+\beta\overline Q)\le\mathds{M}(Q)\ .
\end{equation}
In fact we can define the linear auxiliary function
$$
F(t):=\mathds{M}(Q)-\mathds{M}(Q-t\overline Q)=\sum_l(k^{i_l j_l}-t)d(x_{i_l},y_{j_l})+(k^{i_{l+1}j_l}+t)d(x_{i_{l+1}},y_{j_l})
$$
for which $F(0)=0$, so either
$$
F(\alpha)\ge 0\quad{\rm or}\quad F(-\beta)\ge 0\ .
$$

Iterating this procedure finitely many times, we obtain an integral current without increasing the mass.
\end{Proof}

In order to guarantee the existence of a generalized calibration also for $1$-dimensio\-nal mass-minimizing rectifiable $G$-currents, we need an analog of Proposition \ref{thm_minmin_classic} in the framework
of $G$-currents. Namely, we need to prove that the minimum of the mass among 1-dimensional normal $E$-currents with the same boundary\footnote{Here
the boundary is of course a $0$-dimensional rectifiable $G$-current.} coincides with the minimum calculated among rectifiable $G$-currents. From the argument used in the proof of Proposition \ref{thm_minmin_classic} we realize that the equality of the two minima in the framework of
 1-dimensional $E$-currents is equivalent to the homogeneity property in Remark \ref{rmk:homog_cond}.

\begin{remark}\label{rmk:homog_cond}{\rm
Fix a $0$-dimensional rectifiable $G$-current $R=\sum_{i=1}^n g_i\delta_{x_i}$ with $\|g_i\|_E=1$ in $U\subset\R^d$, then ${\mathscr M}_E(R)={\mathscr M}_G(R)$ if and only if the following is true: given a mass-minimizing rectifiable $G$-current $T$
with $\partial T=R$, then for every $k\in\N$ we have that
\begin{equation}\label{eq_big_qstmark}
\min\left\{\mathds{M}(S)\,:\,S\ {\rm rectifiable}\ G-{\rm current},\partial S=kR\right\}=k\mathds{M}(T)\ .
\end{equation}
Notice that \eqref{eq_big_qstmark} can be meaningfully rewritten as
\begin{equation}\label{eq_big_short}
{\mathscr M}_G(kR)=k{\mathscr M}_G(R)\ .
\end{equation}
The condition \ref{eq_big_short} is clearly necessary to have the equality of the two minima. It is also sufficient, in fact one can approximate a normal $E$-current with polyhedral currents with coefficients in $\Q G$.
}
\end{remark}

The homogeneity property, which is trivially verified for classical integral currents, seems to be an interesting issue
in the class of rectifiable $G$-currents. In Example \ref{ex_casetta_piccola} we exhibit a subset $M\subset\R^2$ such that, if our currents are forced to be supported on $M$, then the homogeneity property does
not hold. In other words, we can say that equality of the two minima does not hold in the framework of 1-dimensional $E$-currents on the metric space $M$. We can see the same phenomenon if we substitute the metric
space $M$ with the metric space $\R^2$ endowed with a density, which is unitary on the points of $M$ and very high outside.

\begin{figure}[htbp]
\begin{center}
\scalebox{1}{
\input{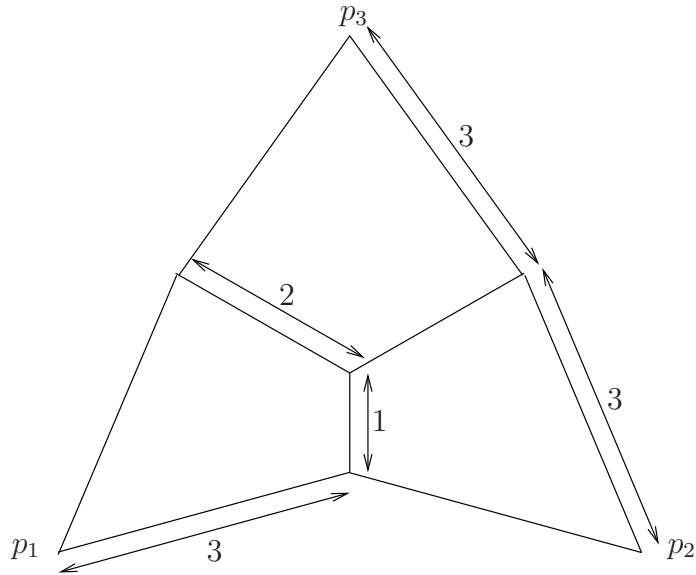}
}
\end{center}
\caption{Metric space in the Example \ref{ex_casetta_piccola}}
\label{fig:casetta}
\end{figure}

\begin{example}\label{ex_casetta_piccola}{\rm
Consider the metric space\footnote{For currents in metric spaces, see \cite{amb_kirch}.} $M\subset\R^2$ given\footnote{The length of each segment is explicitly declared in Figure \ref{fig:casetta}, note that the
set is symmetric with respect to the vertical axis.} in Figure \ref{fig:casetta}. Consider the group $G$, with $n=3$, introduced in \S \ref{sec:steiner} and let
$R:=g_1\delta_{p_1}+g_2\delta_{p_2}+g_3\delta_{p_3}$. We will show that \eqref{eq_big_short} does not hold even when $k=2$. In fact it is trivial to prove that
$$
{\mathscr M}_I(R)=12\ .
$$

\begin{figure}[htbp]
\begin{center}
\scalebox{1}{
\input{pizzaiolo_pazzo.pstex_t}
}
\end{center}
\caption{Counterexample to \eqref{eq_big_short}}
\label{fig:pizzaiolo_pazzo}
\end{figure}

Nevertheless, concerning ${\mathscr M}_I(2R)$, it is shown in Figure \ref{fig:pizzaiolo_pazzo} that
$$
{\mathscr M}_I(2R)\le 23<24=2{\mathscr M}_I(R)\ .
$$
}
\end{example}

\begin{remark}{\rm
One can expect a behavior like that in Example \ref{ex_casetta_piccola} in the metric space $\R^2$ endowed with a density which is very high outside of the subset $M\subset\R^2$. To be precise, let us consider
a bounded continuous function $W:\R^2\to\R$, with $W\equiv 1$ on $M$ and $W>>1$ out of a small neighborhood of $M$. For any couple $(x_0,x_1)\in\R^2$, the distance on $(\R^2,W)$ is given by
$$
d(x_0,x_1)=\inf\left\{\int_0^1|\gamma'(t)|W(\gamma(t))\,{\rm d}t:\,\gamma(0)=x_0\text{ and }\gamma(1)=x_1 \right\}\,.
$$
}
\end{remark}

\newpage
\noindent{\scshape Max-Planck-Institut f\"ur Mathematik in den Naturwissenschaften \\
Inselstrasse 22, 04103 Leipzig, Germany}\\
e-mail: \emph{marchese@mis.mpg.de}\\

\noindent{\scshape Institut f\"ur Mathematik der Universit\"at Z\"urich \\
Winterthurerstrasse 190, CH-8057 Z\"urich, Switzerland}\\
e-mail: \emph{annalisa.massaccesi@math.uzh.ch}

\end{document}